\setlist{nolistsep}
\pgfplotsset{every axis/.append style={
		label style={font=\scriptsize},
		tick label style={font=\scriptsize},
		legend style={font=\tiny\sf, row sep=-2pt},
		legend cell align={left}
}}
\newtheorem{theorem}{Theorem}[section]
\newtheorem{lemma}[theorem]{Lemma}
\newtheorem{corollary}[theorem]{Corollary}
\newtheorem{proposition}[theorem]{Proposition}
\newtheorem{problem}[theorem]{Problem}
\newtheorem{assumption}[theorem]{Assumption}
\theoremstyle{remark}
\newtheorem{remark}[theorem]{Remark}
\numberwithin{equation}{section}
\newcommand*\linenomathpatchAMS[1]{%
	\expandafter\pretocmd\csname #1\endcsname {\linenomathAMS}{}{}%
	\expandafter\pretocmd\csname #1*\endcsname{\linenomathAMS}{}{}%
	\expandafter\apptocmd\csname end#1\endcsname {\endlinenomath}{}{}%
	\expandafter\apptocmd\csname end#1*\endcsname{\endlinenomath}{}{}%
}
\let\linenomathAMS\linenomathWithnumbers
\patchcmd\linenomathAMS{\advance\postdisplaypenalty\linenopenalty}{}{}{}
\let\linenomathAMS\linenomathNonumbers
\definecolor{Rhodamine}{rgb}{0.81, 0.44, 0.69}
\definecolor{ForestGreen}{rgb}{0.13,0.55,0.13}
\definecolor{HDG_SYM}{rgb}{1, 0, 0} 
\definecolor{CHDG_SYM}{rgb}{0, 0, 1} 
\definecolor{CHDG_SYM2}{rgb}{0.81, 0.44, 0.69} 
\definecolor{CHDG_UPW}{rgb}{0.13,0.55,0.13} 
\newcommand{\vecalg}[1]{\boldsymbol{\mathsf{#1}}}
\newcommand{\matalg}[1]{\boldsymbol{\mathsf{#1}}}
\newcommand{\im}{\imath}
\renewcommand{\i}{\im}
\newcommand{\grad}{\boldsymbol \nabla}
\renewcommand{\div}{\grad \cdot}
\newcommand{\BV}{\mathbf{V}}
\newcommand{\bn}{\mathbf{n}}
\newcommand{\bu}{\mathbf{u}}
\newcommand{\bv}{\mathbf{v}}
\newcommand{\bx}{\mathbf{x}}
\newcommand{\bzero}{\mathbf{0}}
\newcommand{\TD}{\mathrm{D}}
\newcommand{\TI}{\mathrm{I}}
\newcommand{\TN}{\mathrm{N}}
\newcommand{\TR}{\mathrm{R}}
\newcommand{\TS}{\mathrm{S}}
\title{A hybridizable discontinuous Galerkin method\\with transmission variables for time-harmonic\\ acoustic problems in heterogeneous media\footnote{Published in \emph{Journal of Computational Physics} (doi: \href{https://doi.org/10.1016/j.jcp.2025.114009}{10.1016/j.jcp.2025.114009}). Distributed under \href{https://creativecommons.org/licenses/by/4.0/}{Creative Commons CC BY 4.0} license.}}
\author[1]{S. Pescuma}
\author[2]{G. Gabard}
\author[3]{T. Chaumont-Frelet}
\author[1]{A. Modave}
\affil[1]{\footnotesize POEMS, CNRS, Inria, ENSTA, Institut Polytechnique de Paris, 91120 Palaiseau, France,
\href{mailto:simone.pescuma@ensta.fr}{\texttt{simone.pescuma@ensta.fr}}\\ \href{mailto:axel.modave@ensta.fr}{\texttt{axel.modave@ensta.fr}}}
\affil[2]{Laboratoire d’Acoustique de l’Université du Mans (LAUM), IA-GS, CNRS, 72085 Le Mans, France
\href{mailto:gwenael.gabard@univ-lemans.fr}{\texttt{gwenael.gabard@univ-lemans.fr}}}
\affil[3]{\footnotesize Inria, Laboratoire Paul Painlevé, Université de Lille, 59655 Villeneuve-d'Ascq, France
\href{mailto:theophile.chaumont@inria.fr}{\texttt{theophile.chaumont@inria.fr}}}
\date{}
\begin{document}

\maketitle

\begin{abstract}
	\setlength{\parindent}{0mm}
	\noindent
	We consider the finite element solution of time-harmonic wave propagation problems in heterogeneous media with hybridizable discontinuous Galerkin (HDG) methods.
	In the case of homogeneous media, it has been observed that the iterative solution of the linear system can be accelerated by hybridizing with transmission variables instead of numerical traces, as performed in standard approaches.
	In this work, we extend the HDG method with transmission variables, which is called the CHDG method, to the heterogeneous case with piecewise constant physical coefficients.
	In particular, we consider formulations with standard upwind and general symmetric fluxes.
	The CHDG hybridized system can be written as a fixed-point problem, which can be solved with stationary iterative schemes for a class of symmetric fluxes.
	The standard HDG and CHDG methods are systematically studied with the different numerical fluxes by considering a series of 2D numerical benchmarks.
	The convergence of standard iterative schemes is always faster with the extended CHDG method than with the standard HDG methods, with upwind and scalar symmetric fluxes.
\end{abstract}

\setlength{\parindent}{0mm}
\setlength{\parskip}{2mm}
\setstretch{1.1}

\section{Introduction}

Discontinuous Galerkin (DG) methods are widely used for solving boundary value problems because of their ability to provide high-fidelity numerical solutions for complicated geometric and physical configurations, see e.g.~\cite{shi2018, lu2004, chung2013, li2014, carstensen2016}.
In the context of time-harmonic acoustic problems, they allow the use of high-order polynomial basis functions, which limits the dispersion error that occurs when considering high-frequency cases, e.g.~\cite{beriot2015, lieu2016, christodoulou2017, congreve2019}.
However, from a computational point of view, these methods require the solution of large sparse unstructured linear systems.
On the one hand, direct solvers require huge amounts of computation and are complicated to run efficiently on parallel computers.
On the other hand, iterative solvers require much less memory and allow efficient parallel implementations, but they may exhibit slow convergence due to intrinsic properties of the time-harmonic problems \cite{ernst2011difficult}.
To speed up convergence, preconditioning techniques and domain decomposition methods (DDM) have been and are being intensively studied, see e.g.~\cite{bayliss1983, erlangga2004, engquist2011, gander2019}.

Here, we consider the iterative solution of time-harmonic acoustic models with Hybridizable Discontinuous Galerkin (HDG) methods.
The principle of HDG methods is to introduce a hybrid variable on the mesh skeleton in order to decouple the physical variables at the interface between the elements.
Eliminating the physical variables within every element then leads to a hybridized system where the only unknown is the hybrid variable itself, see e.g.~\cite{barucq2021_2, barucq2023, chen2013, cockburn2008, giorgiani2013, nguyen2011, pham2024numerical}.
This approach modifies the size, structure, and conditioning of the global system that is actually solved, potentially allowing for more efficient solution procedures.
Once the hybrid variables are calculated, the physical variables inside each element can be recovered with a simple and inexpensive post-processing step.

In standard HDG methods, the so-called numerical trace is taken as the hybrid variable.
Here, we consider an alternative hybridization strategy where the hybrid variable
corresponds to a characteristic variable, which can be interpreted as a transmission variable or a Robin trace.
This approach, which we call CHDG, was recently introduced by two of the authors in~\cite{modave2023}. It
is related to some types of non-overlapping DDM \cite{collino2020, despres1991, farhat2009, modave2020}, Ultra Weak Variational Formulations (UWVF) \cite{barucq2021, barucq2024, cessenat1998, gabard2007, huttunen2002, parolin2022} and a hybridization technique studied in \cite{monk2010hybridizing}.

In Ref.~\cite{modave2023}, the performance of the DG, standard HDG and CHDG methods for fast iterative procedures has been compared in the cases of time-harmonic scalar wave propagation problems set in a homogeneous medium. In particular, it has been observed that the convergence of classical iterative schemes is faster with the CHDG method than with the standard DG and HDG methods.

In this work, we investigate extensions of the CHDG approach to scalar time-harmonic problems in heterogeneous media with material coefficients that are (possibly) discontinuous at the interfaces between the elements.
We consider DG schemes using either standard upwind fluxes, or a general family of symmetric fluxes based on differential operators defined on the interfaces between elements.
The corresponding transmission variables are used as hybrid variable in the hybridization process.
By extending the results in \cite{modave2023}, we show that the hybridized system is well-posed with both types of fluxes, and that it can be written in the form $(\TI-\Pi\TS)g=b$, with the set of all hybrid variables $g$, an exchange operator $\Pi$, a scattering operator $\TS$, and a global right-hand side $b$.
In the case of symmetric fluxes, we prove under reasonable assumptions that the operator $\Pi\TS$ is a strict contraction, and that the hybridized system can be solved with fixed-point iterations.

In practice, Krylov-type iterative solvers are widely used to solve time-harmonic problems.
Here, by using a set of academic benchmarks, we study and compare the performance of the GMRES iteration and the CGNR iteration (i.e.~the conjugate gradient applied to the normal equation) for solving the hybridized systems.
More specifically, we consider hybridized systems corresponding to the standard HDG and CHDG methods with different numerical fluxes.
We observe that, as in \cite{modave2023}, CHDG always requires fewer iterations than the standard HDG to reach a given accuracy with the GMRES and CGNR iterations.
Moreover, the convergence of standard upwind fluxes and symmetric fluxes is similar, and using symmetric fluxes with higher-order derivatives does not seem to speed up the convergence.

The remainder of this paper is structured as follows.
In Section \ref{problem}, we introduce the problem, the standard DG method, and the upwind and general symmetric numerical fluxes.
The standard HDG and CHDG methods are described and studied in Sections \ref{hybridization_HDG} and \ref{hybridization_CHDG}.
In particular, in the latter case, the strict contraction of the operator $\Pi\TS$ in the CHDG system is proven for a class of symmetric numerical fluxes.
In Section \ref{numerical_results}, the hybridizable methods and the numerical fluxes are systematically assessed by considering a set of 2D reference numerical benchmarks with different iterative schemes.
Conclusions and perspectives are proposed in Section \ref{conclusion}.

\section{Discontinuous Galerkin methods}
\label{problem}

Let $\Omega\subset\mathbb{R}^d$, with $d=2$ or $3$, be a Lipschitz polytopal domain.
The boundary $\partial\Omega$ of the domain is partitioned into three non-overlapping polytopal Lipschitz subsets $\Gamma_{\TD}$, $\Gamma_{\TN}$ and $\Gamma_{\TR}$, corresponding to the Dirichlet, Neumann and Robin boundaries, respectively.
We consider the following time-harmonic wave propagation problem:
\begin{align}
	\left\{
	\begin{aligned}
		-\im\kappa\eta^{-1}p + \div \bu & = 0,       &  & \text{in $\Omega$},     \\
		-\im\kappa\eta\bu + \grad p     & = \bzero,  &  & \text{in $\Omega$},     \\
		p                               & = s_{\TD}, &  & \text{on $\Gamma_\TD$}, \\
		\bn\cdot\bu                     & = s_{\TN}, &  & \text{on $\Gamma_\TN$}, \\
		p - \eta\bn\cdot\bu             & = s_{\TR}, &  & \text{on $\Gamma_\TR$},
	\end{aligned}
	\right.
	\label{eqn:pbm}
\end{align}
written for the unknown scalar field $p: \Omega \to \mathbb{C}$ and vector field $\bu: \Omega \to \mathbb{C}^d$ corresponding to the acoustic pressure and velocity.
On the boundary, we have introduced the source terms $s_{\TD}: \Gamma_{\TD} \to \mathbb{C}$, $s_{\TN}: \Gamma_{\TN} \to \mathbb{C}$ and $s_{\TR}: \Gamma_{\TR} \to \mathbb{C}$.
The propagation medium is characterised by its density $\rho:\Omega\rightarrow\mathbb{R}$ and sound speed $c:\Omega\rightarrow\mathbb{R}$.
These two quantities are strictly positive and assumed to be piecewise constant.
The problem \eqref{eqn:pbm} is written in the frequency domain, assuming a time dependence $e^{-\im\omega t}$ for the solution, where $\omega$ is the real-valued angular frequency.
We also introduced the wavenumber $\kappa:\Omega\rightarrow\mathbb{R}$ and the acoustic impedance $\eta:\Omega\rightarrow\mathbb{R}$ such that $\kappa=\omega/c$ and $\eta=\rho c$.
The vector field $\bn:\partial\Omega\rightarrow\mathbb{R}^d$ is the unit outward normal to $\Omega$.
For brevity, we do not consider source terms on the right-hand sides of the first two equations in \eqref{eqn:pbm}, but these could be included without difficulty.


\subsection{Mesh, approximation spaces and inner products}

To solve problem \eqref{eqn:pbm}, we use a conforming mesh $\mathcal{T}_h$ of the domain $\Omega$ consisting of simplicial elements $K$.
The collection of element boundaries is denoted by $\partial\mathcal{T}_h:=\{\partial K~|~K\in\mathcal{T}_h\}$, and the collection of faces is denoted by $\mathcal{F}_h$.
The collection of faces of an element $K$ is denoted by $\mathcal{F}_K$.

For simplicity, we fix a polynomial degree $\mathrm{p}\ge0$ and introduce the following approximation spaces for the scalar and vector fields:
\begin{align}
	V_h:=\prod_{K\in\mathcal{T}_h}\mathcal{P}_\mathrm{p}(K) \quad \text{and} \quad \BV_h:=\prod_{K\in\mathcal{T}_h}\boldsymbol{\mathcal{P}}_\mathrm{p}(K),
\end{align}
\noindent where $\mathcal{P}_\mathrm{p}(\cdot)$ and $\boldsymbol{\mathcal{P}}_\mathrm{p}(\cdot)$ denote spaces of scalar and vector complex-valued polynomial of degree smaller or equal to $\mathrm{p}$.
The restrictions of $u_h\in V_h$ and $\bu_h\in\BV_h$ on $K$ are denoted by $u_K$ and $\bu_K$, respectively. The coefficients $\kappa$ and $\eta$ are assumed to be constant on each element. Their values on $K$ are denoted by $\kappa_K$ and $\eta_K$, respectively.
For a face $F$ of $K$, $\bn_{K,F}$ is the unit normal on $F$ pointing outside $K$.

We introduce the following sesquilinear forms
\begin{align}
	(u,v)_K
	& :=\int_Ku\overline{v} \ d\bx,
	&
	(\bu,\bv)_K
	& :=\int_K\bu\cdot\overline{\bv} \ d\bx,
	&
	\langle u,v\rangle_{\partial K}
	& :=\sum_{F\in\mathcal{F}_K}\int_F u\overline{v} \ d\sigma(\bx),
	\\
	(u,v)_{\mathcal{T}_h}
	& :=\sum_{K\in\mathcal{T}_h}(u,v)_K,
	&
	(\bu,\bv)_{\mathcal{T}_h}
	& :=\sum_{K\in\mathcal{T}_h}(\bu,\bv)_K,
	&
	\langle u,v\rangle_{\partial\mathcal{T}_h}
	& :=\sum_{K\in\mathcal{T}_h}\langle u,v\rangle_{\partial K},
\end{align}
where the quantities used in the surface integral $\langle\cdot,\cdot\rangle_{\partial K}$ correspond to the restriction of fields or coefficients defined on $K$ (e.g.~$v_K$, $\bv_K$, $\kappa_K$ and $\eta_K$) or quantities associated with the faces of $K$ (e.g.~$\bn_{K,F}$ with $F\in\mathcal{F}_K$), unless explicitly specified.


\subsection{Variational formulation of the problem}

The general discontinuous Galerkin (DG) formulation of system \eqref{eqn:pbm} reads:
\begin{problem}
	\label{pbm:DGform}
	Find $(p_h,\bu_h)\in V_h\times\BV_h$ such that, for all $(q_h,\bv_h)\in V_h\times\BV_h$,
	\begin{align}
		\left\{
		\begin{aligned}
			- \i\big(\kappa\eta^{-1}p_h, q_h\big)_{\mathcal{T}_h}
			- \big(\bu_h, \grad q_h\big)_{\mathcal{T}_h}
			+ \big\langle\bn\cdot\widehat{\bu}, q_h\big\rangle_{\partial\mathcal{T}_h}
			& = 0, \\
			- \i\big(\kappa\eta\bu_h, \bv_h\big)_{\mathcal{T}_h}
			- \big(p_h, \grad\cdot\bv_h\big)_{\mathcal{T}_h}
			+ \big\langle\widehat{p}, \bn\cdot\bv_h\big\rangle_{\partial\mathcal{T}_h}
			& = 0,
		\end{aligned}
		\right.
	\end{align}
	with the \emph{numerical fluxes} $\widehat{p}$ and $\bn\cdot\widehat{\bu}$ to be defined.
\end{problem}

The numerical fluxes are defined face by face.
At each \emph{interior face} (i.e.~$F\not\subset\partial\Omega$), they depend on the values of the physical fields and coefficients associated to both neighboring elements.
At each \emph{boundary face} (i.e.~$F\subset\partial\Omega$), a specific definition is used to prescribe a boundary condition.
The numerical fluxes then depend on the boundary data.
Hereafter, two kinds of numerical fluxes are considered: standard upwind fluxes and general symmetric fluxes.

By convention, if $F$ is an interior face of an element $K$, then $K'$ is the neighboring element of $K$ sharing this face.
If $F$ is a boundary face, we define $\eta_{K'} := \eta_K$ to simplify the presentation, even if there is no neighboring element $K'$.


\subsection{Upwind numerical fluxes}

Standard upwind numerical fluxes are obtained by solving a Riemann problem associated to each interface between two elements, see e.g.~\cite{leveque2002, wilcox2010}.
For the wave propagation problem \eqref{eqn:pbm} with discontinuous coefficients between elements, the fluxes are given by Equations (9.58) at the end of Section 9.9 of Ref.~\cite{leveque2002}.
For a given face $F$, the \emph{upwind numerical fluxes} are defined as
\begin{align}
	\left\{
	\begin{aligned}
		\widehat{p}_F
		& := \frac{1}{\eta_K+\eta_{K'}} \left(\eta_{K'}g^{\oplus}_{K,F}+\eta_K g^{\ominus}_{K,F}\right),
		\\
		\bn_{K,F}\cdot\widehat{\bu}_F
		& := \frac{1}{\eta_K+\eta_{K'}} \left(g^{\oplus}_{K,F}- g^{\ominus}_{K,F}\right),
	\end{aligned}
	\right.
	\label{eqn:upwind:fluxes}
\end{align}
where $g^{\oplus}_{K,F}$ and $g^{\ominus}_{K,F}$ are the outgoing and incoming transmission variables, respectively.
In the standard terminology of Riemann solvers, the transmission variables are generally called characteristic variables, see e.g. \cite{toro2013, hesthaven2007}.
The \emph{outgoing transmission variable} depends on the physical variables and coefficients of the considered element $K$.
It is defined as
\begin{align}
	g^{\oplus}_{K,F}
	& := p_K+\eta_K\bn_{K,F}\cdot\bu_K.
	\label{eqn:upwind:outVar}
\end{align}
The \emph{incoming transmission variable} depends on whether there is a neighboring element $K'$ or not.
In the latter case, it depends on the boundary condition.
It is defined as
\begin{align}
	g^{\ominus}_{K,F}
	& :=
	\left\{
	\begin{aligned}
		& p_{K'}-\eta_{K'}\bn_{K,F}\cdot\bu_{K'} = g^{\oplus}_{K',F},
		&
		& \text{if } F\not\subset\partial\Omega,
		\\
		& 2s_{\TD}-g_{K,F}^{\oplus},
		&
		& \text{if } F\subset\Gamma_{\TD},
		\\
		& g_{K,F}^{\oplus}-2\eta_Ks_{\TN},
		&
		& \text{if } F\subset\Gamma_{\TN},
		\\
		& s_{\TR},
		&
		& \text{if } F\subset\Gamma_{\TR}.
	\end{aligned}
	\right.
	\label{eqn:upwind:incVar}
\end{align}
The upwind numerical fluxes can be written more explicitly as
\begin{align}
	& \left\{
	\begin{aligned}
		\widehat{p}_F
		& = \frac{\eta_K\eta_{K'}}{\eta_K+\eta_{K'}}\left(\frac{1}{\eta_K}p_K+\frac{1}{\eta_{K'}}p_{K'}+\bn_{K,F}\cdot\left(\bu_K-\bu_{K'}\right)\right),
		\\
		\bn_{K,F}\cdot\widehat{\bu}_F
		& = \frac{1}{\eta_K+\eta_{K'}}\left(\bn_{K,F}\cdot\left(\eta_K\bu_K+\eta_{K'}\bu_{K'}\right) + p_K-p_{K'}\right),
	\end{aligned}
	\right.
	&
	& \text{if $F\not\subset\partial\Omega$,}
	\\
	& \left\{
	\begin{aligned}
		\widehat{p}_F
		& = s_{\TD},
		\\
		\bn_{K,F}\cdot\widehat{\bu}_F
		& = \bn_{K,F}\cdot\bu_K + \frac{1}{\eta_K}\left(p_K - s_{\TD}\right),
	\end{aligned}
	\right.
	&
	& \text{if $F\subset\Gamma_\TD$,}
	\\
	& \left\{
	\begin{aligned}
		\widehat{p}_F
		& = p_K+\eta_{K}\left(\bn_{K,F}\cdot\bu_K-s_{\TN}\right),
		\\
		\bn_{K,F}\cdot\widehat{\bu}_F
		& = s_{\TN},
	\end{aligned}
	\right.
	&
	& \text{if $F\subset\Gamma_\TN$,}
	\\
	& \left\{
	\begin{aligned}
		\widehat{p}_F
		& = \frac{1}{2}\left(p_K+\eta_{K}\bn_{K,F}\cdot\bu_K+s_{\TR}\right),
		\\
		\bn_{K,F}\cdot\widehat{\bu}_F
		& = \frac{1}{2\eta_K}\left(p_K+\eta_{K}\bn_{K,F}\cdot\bu_K-s_{\TR}\right),
	\end{aligned}
	\right.
	&
	& \text{if $F\subset\Gamma_\TR$.}
\end{align}
These fluxes satisfy the relation
\begin{align}
	\widehat{p}_F + \eta_K \bn_{K,F}\cdot\widehat{\bu}_F
	= p_K + \eta_K \bn_{K,F}\cdot\bu_K,
	\label{eqn:upwind:relForHDG}
\end{align}
which will be useful to derive the standard HDG formulation.


\subsection{Symmetric numerical fluxes}

We now consider a general family of numerical fluxes where the coefficients do not depend on the elements, but only on the faces.
For each face $F$, we introduce a general operator $\mathcal{A}_F$ that verifies the following assumption where $\mathcal P_{\mathrm{p}}(F)$
denotes the space of polynomials of degree at most $\mathrm{p}$ defined on $F$.

\begin{assumption}
	\label{assumption}
	For each face $F$, the linear operator $\mathcal{A}_F: \mathcal P_{\mathrm{p}}(F) \to \mathcal P_{\mathrm{p}}(F)$ is positive and self-adjoint.
\end{assumption} 
For a given face $F$, the \emph{symmetric numerical fluxes} are defined as
\begin{align}
	\left\{
	\begin{aligned}
		\widehat{p}_F
		& := \frac{1}{2}\mathcal{A}_F \left(g^{\oplus}_{K,F}+g^{\ominus}_{K,F}\right),
		\\
		\bn_{K,F}\cdot\widehat{\bu}_F
		& := \frac{1}{2} \left(g^{\oplus}_{K,F}- g^{\ominus}_{K,F}\right),
	\end{aligned}
	\right.
	\label{eqn:sym:fluxes}
\end{align}
where the \emph{outgoing transmission variable} is defined as
\begin{align}
	g^{\oplus}_{K,F}
	& := \mathcal{A}_F^{-1}p_K+\bn_{K,F}\cdot\bu_K,
	\label{eqn:sym:outVar}
\end{align}
and the \emph{incoming transmission variable} is defined as
\begin{align}
	g^{\ominus}_{K,F}
	& :=
	\left\{
	\begin{aligned}
		& \mathcal{A}_F^{-1}p_{K'}-\bn_{K,F}\cdot\bu_{K'} = g_{K',F}^{\oplus},
		&
		& \text{if } F\not\subset\partial\Omega,
		\\
		& 2\mathcal{A}_F^{-1}s_\TD-g_{K,F}^{\oplus},
		&
		& \text{if } F\subset\Gamma_\TD,
		\\
		& g_{K,F}^{\oplus}-2 s_\TN,
		&
		& \text{if } F\subset\Gamma_\TN,
		\\
		& \mathcal{B}_{K,F,+}^{-1} \left(\mathcal{B}_{K,F,-} g_{K,F}^{\oplus} + 2\eta_K^{-1}s_\TR\right),
		&
		& \text{if } F\subset\Gamma_\TR,
	\end{aligned}
	\right.
	\label{eqn:sym:incVar}
\end{align}
with $\mathcal{B}_{K,F,\pm} := 1\pm\eta_K^{-1}\mathcal{A}_F$.
The symmetric numerical fluxes can be written more explicitly as
\begin{align}
	& \left\{
	\begin{aligned}
		\widehat{p}_F
		& = \frac{p_K+p_{K'}}{2}+\mathcal{A}_F\left(\bn_{K,F}\cdot\frac{\bu_K-\bu_{K'}}{2}\right),
		\\
		\bn_{K,F}\cdot\widehat{\bu}_F
		& = \bn_{K,F}\cdot\frac{\bu_K+\bu_{K'}}{2}+\mathcal{A}_F^{-1}\left(\frac{p_K-p_{K'}}{2}\right),
	\end{aligned}
	\right.
	&
	& \text{if $F\not\subset\partial\Omega$,}
	\\
	& \left\{
	\begin{aligned}
		\widehat{p}_F
		& = s_{\TD},
		\\
		\bn_{K,F}\cdot\widehat{\bu}_F
		& = \bn_{K,F}\cdot\bu_K + \mathcal{A}_F^{-1}\left(p_K - s_{\TD}\right),
	\end{aligned}
	\right.
	&
	& \text{if $F\subset\Gamma_\TD$,}
	\\
	& \left\{
	\begin{aligned}
		\widehat{p}_F
		& = p_K+\mathcal{A}_F\left(\bn_{K,F}\cdot\bu_K-s_{\TN}\right),
		\\
		\bn_{K,F}\cdot\widehat{\bu}_F
		& = s_{\TN},
	\end{aligned}
	\right.
	&
	& \text{if $F\subset\Gamma_\TN$,}
	\\
	& \left\{
	\begin{aligned}
		\widehat{p}_F
		& = \mathcal{B}_{K,F,+}^{-1}
		\left(p_K + \mathcal{A}_F\left(\bn_{K,F}\cdot\bu_K\right) + \eta_K^{-1}\mathcal{A}_F s_{\TR}\right),
		\\
		\bn_{K,F}\cdot\widehat{\bu}_F
		& = \eta_K^{-1} \mathcal{B}_{K,F,+}^{-1}
		\left(p_K + \mathcal{A}_F\left(\bn_{K,F}\cdot\bu_K\right) - s_{\TR}\right),
	\end{aligned}
	\right.
	&
	& \text{if $F\subset\Gamma_\TR$.}
\end{align}

Similar to the upwind fluxes, the following relation holds
\begin{align}
	\widehat{p}_F + \mathcal{A}_F\left(\bn_{K,F}\cdot\widehat{\bu}_F\right)
	& =  p_K + \mathcal{A}_F \left(\bn_{K,F}\cdot\bu_K\right).
	\label{eqn:sym:relForHDG}
\end{align}
It will be used in the derivation of the standard HDG formulation.


\subsubsection*{Zeroth-order symmetric fluxes}

If $\mathcal{A}_F=\mu_F$ is a strictly positive real parameter, the numerical fluxes simply read
\begin{align}
	\left\{
	\begin{aligned}
		\widehat{p}_F
		& := \frac{1}{2}\mu_F (g^{\oplus}_{K,F}+g^{\ominus}_{K,F}),
		\\
		\bn_{K,F}\cdot\widehat{\bu}_F
		& := \frac{1}{2} (g^{\oplus}_{K,F}-g^{\ominus}_{K,F}),
	\end{aligned}
	\right.
\end{align}
with
\begin{align}
	g^{\oplus}_{K,F}
	& := \frac{1}{\mu_F}p_K+\bn_{K,F}\cdot\bu_K,
	\\
	g^{\ominus}_{K,F}
	& := \frac{1}{\mu_F}p_{K'}-\bn_{K,F}\cdot\bu_{K'}, \qquad \text{if $F\not\subset\partial\Omega$}.
\end{align}
The definitions of the incoming transmission variables when $F\subset\partial\Omega$ are adapted accordingly.
Note that these fluxes are equivalent to the standard upwind fluxes if the impedance at the interfaces between elements is continuous, and $\mu_F=\eta_K=\eta_{K'}$ for neighboring elements $K$ and $K'$, regardless of whether the wavenumber is continuous or not.
We also point out that these transmission variables correspond to the lowest-order absorbing boundary
conditions approximating Dirichlet-to-Neumann (DtN) maps.

\subsubsection*{Second-order symmetric fluxes}

By analogy with non-overlapping domain decomposition methods (e.g.~\cite{modave2020}),
the operator $\mathcal{A}_F$ can be defined by using transmission operators based on more involved
domain truncation techniques, such as high-order absorbing boundary conditions.
We consider here an operator with second-order partial derivatives used in a second-order absorbing boundary conditions in \cite{despres2021}.
This operator is implicitly defined by
\begin{align}
	\langle \mathcal{A}_F \phi_F,v_F \rangle_F
	+
	\frac{1}{2\kappa_F^2} \langle \grad_F (\mathcal{A}_F \phi_F),\grad_F v_F\rangle_F
	=
	\mu_F \langle \phi_F,v_F\rangle_F, \quad \forall \phi_F,v_F \in \mathcal{P}_{\mathrm{p}}(F),
	\label{eqn:op2sym}
\end{align}
where the real parameter $\mu_F$ is strictly positive and $\grad_{\!F}$ is the gradient operator on $F$.
The operator $\mathcal{A}_F$ is then a discrete realization of
\begin{equation*}
	\mathcal{A}_F := \mu_F\left(1-\frac{1}{2\kappa_F^2}\Delta_F\right)^{-1},
\end{equation*}
where $\Delta_F$ is the Laplace--Beltrami operator on $F$,
completed by homogeneous Neumann boundary conditions at the extremities of $F$.
It satisfies Assumption~\ref{assumption}.

\begin{proposition} [Positivity and self-adjointness of the second-order operator]
	The operator $\mathcal{A}_F$ \eqref{eqn:op2sym} defined on an edge $F$ is positive and self-adjoint.
\end{proposition}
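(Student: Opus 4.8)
The plan is to recognize that the implicit definition \eqref{eqn:op2sym} presents $\mathcal{A}_F$ as $\mu_F$ times the inverse of the operator associated with an auxiliary, manifestly Hermitian and coercive sesquilinear form, and then to transfer positivity and self-adjointness from that form to $\mathcal{A}_F$. First I would introduce on $\mathcal{P}_{\mathrm{p}}(F)$ the sesquilinear form
\begin{align}
	a(\psi,v) := \langle \psi,v\rangle_F + \frac{1}{2\kappa_F^2}\langle \grad_F \psi, \grad_F v\rangle_F,
\end{align}
together with the unique linear operator $T:\mathcal{P}_{\mathrm{p}}(F)\to\mathcal{P}_{\mathrm{p}}(F)$ such that $\langle T\psi,v\rangle_F = a(\psi,v)$ for all $v\in\mathcal{P}_{\mathrm{p}}(F)$, which is well-defined since $\mathcal{P}_{\mathrm{p}}(F)$ is finite-dimensional and $\langle\cdot,\cdot\rangle_F$ is an inner product. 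With this notation, relation \eqref{eqn:op2sym} reads $\langle T(\mathcal{A}_F\phi_F),v_F\rangle_F = \mu_F\langle\phi_F,v_F\rangle_F$ for every $v_F$, that is $T\mathcal{A}_F = \mu_F\,\mathrm{Id}$, or equivalently $\mathcal{A}_F = \mu_F T^{-1}$ once $T$ is shown to be invertible.

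Then I would verify the two structural properties of $T$. For self-adjointness, since $\langle\cdot,\cdot\rangle_F$ is conjugate-symmetric and $\kappa_F,\mu_F$ are real, each term of $a$ is Hermitian, so $a(\psi,v)=\overline{a(v,\psi)}$, whence $\langle T\psi,v\rangle_F=\overline{\langle Tv,\psi\rangle_F}=\langle\psi,Tv\rangle_F$ and $T=T^{\ast}$. For positivity, taking $v=\psi$ gives $\langle T\psi,\psi\rangle_F = \|\psi\|_F^2 + \tfrac{1}{2\kappa_F^2}\|\grad_F\psi\|_F^2$, which is real and strictly positive for $\psi\neq 0$ because the first term alone already controls $\psi$. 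Hence $T$ is self-adjoint and positive definite on a finite-dimensional space, so it is invertible and $T^{-1}$ is again self-adjoint and positive definite; multiplying by the positive real scalar $\mu_F$ preserves both properties, so $\mathcal{A}_F=\mu_F T^{-1}$ is positive and self-adjoint, which is the claim. As a by-product this confirms that $\mathcal{A}_F$ is well-defined and invertible, as tacitly used in \eqref{eqn:sym:outVar}.

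The step requiring the most care is not the algebra but the preliminary well-posedness: one must check that \eqref{eqn:op2sym} indeed determines $\mathcal{A}_F\phi_F$ uniquely, which is precisely the nondegeneracy of $a$ established above, and one must note that the absence of boundary contributions in the weak form \eqref{eqn:op2sym} is exactly what encodes the homogeneous Neumann conditions at the endpoints of $F$, so that no integration by parts is performed and the Laplace--Beltrami operator is never manipulated directly. Everything else reduces to the standard transfer of properties from a Hermitian coercive form to its Riesz operator and to the inverse of that operator.
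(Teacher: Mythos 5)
Your proof is correct and takes essentially the same route as the paper's: both observe that the left-hand side of \eqref{eqn:op2sym} is a Hermitian, coercive sesquilinear form, so that $\mathcal{A}_F^{-1}$ (equivalently your $\mu_F^{-1}T$) inherits positivity by testing with $v_F=u_F$ and self-adjointness from the symmetry of the form, the finite-dimensional setting then transferring both properties to $\mathcal{A}_F$ itself. Your version is marginally more careful in that it explicitly establishes invertibility (hence that $\mathcal{A}_F$ is well defined) and strict positivity via the coercivity of the mass term, points the paper leaves implicit.
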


\begin{proof}
	Because we are in a finite dimensional framework, it is sufficient to prove that $\mathcal{A}_F^{-1}$ is positive and self-adjoint.
	First, we prove that $\mathcal{A}_F^{-1}$ is positive. To do so, given
	$\phi_F \in \mathcal P_{\mathrm{p}}(F)$, we let $u_F := \mathcal{A}_F\phi_F$, and
	pick $v_F = u_F$ in~\eqref{eqn:op2sym}. This gives
	\begin{align}
		0 \leq \langle u_F, u_F\rangle_F+\frac{1}{2\kappa_F^2}\langle\grad_F u_F,\grad_F u_F \rangle_F = \mu_F \langle \mathcal A_F^{-1} u_F,u_F \rangle_F,
	\end{align}
	which is the desired inequality, as $\mu_F > 0$.
	Then, we prove that $\mathcal{A}_F^{-1}$ is self-adjoint, which follows from the fact
	that the left-hand side in~\eqref{eqn:op2sym} is an Hermitian form. Indeed,
	\begin{align}
		\langle\mathcal{A}_F^{-1}u_F,v_F\rangle_F
		& =
		\frac{1}{\mu_F}\langle u_F,v_F \rangle_F
		+
		\frac{1}{2\kappa_F^2\mu_F} \langle \grad_F u_F,\grad_F v_F\rangle_F,
		\\
		& =
		\overline{
			\frac{1}{\mu_F}\langle v_F,u_F \rangle_F
			+
			\frac{1}{2\kappa_F^2\mu_F} \langle \grad_F v_F,\grad_F u_F\rangle_F,
		}
		\\
		& =
		\overline{\langle\mathcal{A}_F^{-1}v_F,u_F\rangle_F}
		=
		\langle u_F,\mathcal{A}_F^{-1}v_F\rangle_F
	\end{align}
	for all $u_F,v_F \in \mathcal{P}_{\mathrm{p}}(F)$.
\end{proof}

\section{Hybridization with numerical trace}
\label{hybridization_HDG}

In HDG methods, the hybridization consists in introducing an additional variable on the mesh skeleton to decouple the physical unknowns defined within neighboring elements.
The physical unknowns are then eliminated to obtain a hybridized system where the unknowns are associated only with the additional variable, called the \emph{hybrid variable}.
This step requires solving local element-wise problems.

We emphasize that this process does not affect the accuracy of the numerical solution, since it can be interpreted as an algebraic manipulation leading to an equivalent linear system.
When this linear system is solved with a direct solver, the physical unknowns that can be reconstructed are those of the original DG system.
However, the hybridized system has different algebraic properties that affect the iterative numerical solution.


\subsection{Standard HDG formulations}

In the standard HDG methods, the hybrid variable, denoted $\widehat{p}_h$, corresponds to the numerical flux $\widehat{p}$, frequently called the \emph{numerical trace} in the literature, see e.g. \cite{griesmaier2011}.
It belongs to the space of polynomials of order $\mathrm{p}$ on each face $F$ of the mesh, denoted by
\begin{align}
	\widehat{V}_h := \prod_{F\in\mathcal{F}_h}\mathcal{P}_\mathrm{p}(F).
\end{align}
For any field $\widehat{p}_h\in\widehat{V}_h$, there is one set of scalar unknowns associated to each face of the mesh.

For the case with symmetric numerical fluxes, the variable $\widehat{p}_h$ is defined by equation \eqref{eqn:sym:fluxes}.
Using equation \eqref{eqn:sym:relForHDG} in Problem \ref{pbm:DGform} then leads to the following HDG formulation.
\begin{problem}[HDG formulation with symmetric fluxes]
	\label{pbm:HDGform:sym}
	Find $(p_h,\bu_h,\widehat{p}_h)\in V_h\times\textbf{V}_h\times \widehat{V}_h$ such that, for all $(q_h,\bv_h,\widehat{q}_h)\in V_h\times\textbf{V}_h\times \widehat{V}_h$,
	\begin{align}
		\left\{
		\begin{aligned}
			- \i\big(\kappa\eta^{-1}p_h, q_h\big)_{\mathcal{T}_h}
			- \big(\bu_h, \grad q_h\big)_{\mathcal{T}_h}
			+ \big\langle \bn\cdot\bu_h + \mathcal{A}^{-1}(p_h-\widehat{p}_h), q_h \big\rangle_{\partial\mathcal{T}_h}
			& = 0, \\
			- \i\big(\kappa\eta\bu_h,\bv_h\big)_{\mathcal{T}_h}
			- \big(p_h,\grad\cdot\bv_h\big)_{\mathcal{T}_h}
			+ \big\langle\widehat{p}_h, \bn\cdot\bv_h\big\rangle_{\partial\mathcal{T}_h}
			& = 0,
		\end{aligned}
		\right.
	\end{align}
	and
	\begin{multline}
		\textstyle
		\big\langle \widehat{p}_h, \widehat{q}_h \big\rangle_{\mathcal{F}_h}
		- \big\langle \frac{1}{2}(p_h+\mathcal{A}(\bn\cdot\bu_h)), \widehat{q}_h \big\rangle_{\partial\mathcal{T}_h\setminus\partial\Omega}
		\\
		\textstyle
		- \big\langle p_h+\mathcal{A}(\bn\cdot\bu_h), \widehat{q}_h \big\rangle_{\Gamma_{\TN}}
		- \big\langle (1+\eta^{-1}\mathcal{A})^{-1} (p_h + \mathcal{A}(\bn\cdot\bu_h)), \widehat{q}_h \big\rangle_{\Gamma_{\TR}}
		\\
		\textstyle
		= \big\langle s_{\TD},\widehat{q}_h \big\rangle_{\Gamma_{\TD}}
		- \big\langle \mathcal{A} s_{\TN},\widehat{q}_h \big\rangle_{\Gamma_{\TN}}
		+ \big\langle (1+\eta^{-1}\mathcal{A})^{-1} (\eta^{-1}\mathcal{A} s_{\TR}),\widehat{q}_h \big\rangle_{\Gamma_{\TR}}.
	\end{multline}
\end{problem}

\begin{remark}[Case with upwind fluxes]
	For the case with the upwind numerical fluxes, the variable $\widehat{p}_h$ is defined by equation \eqref{eqn:upwind:fluxes}, and the HDG formulation is obtained similarly by using equation \eqref{eqn:upwind:relForHDG} in Problem \ref{pbm:DGform}.
	We refer to \cite{cockburn2009, cockburn2016} for the complete description of the HDG formulation with upwind fluxes.
\end{remark}


\subsection{Local element-wise discrete problems}

In the hybridization procedure, the physical fields $p_h$ and $\bu_h$ are eliminated by solving local element-wise problems, where the numerical trace $\widehat{p}_h$ is considered as a given datum.

For each element $K$, the local problem corresponding to the symmetric fluxes reads as follows.
\begin{problem}
	Find $(p_K,\bu_K)\in \mathcal{P}_\mathrm{p}(K)\times\boldsymbol{\mathcal{P}}_\mathrm{p}(K)$ such that, for all $(q_K,\bv_K)\in\mathcal{P}_\mathrm{p}(K)\times\boldsymbol{\mathcal{P}}_\mathrm{p}(K)$,
	\begin{align}
		\left\{
		\begin{aligned}
			&
			- \imath \big(\kappa_K\eta_K^{-1}p_K,q_K\big)_{K}
			- \big(\bu_K,\grad q_K\big)_{K}
			+ \sum_{F\in\mathcal{F}_K} \big\langle(\bn_{K,F}\cdot\bu_K + \mathcal{A}_F^{-1}p_K),q_K\big\rangle_{F}
			\\
			& \qquad\qquad
			= \sum_{F\in\mathcal{F}_K} \big\langle\mathcal{A}_F^{-1}\widehat{p}_F,q_K\big\rangle_{F},
			\\
			&
			- \imath \big(\kappa_K\eta_K\bu_K,\bv_K\big)_{K}
			- \big(p_K,\grad\cdot\bv_K\big)_{K}
			= - \sum_{F\in\mathcal{F}_K} \big\langle\widehat{p}_F,\bn_{K,F}\cdot\bv_K\big\rangle_{F},
		\end{aligned}
		\right.
	\end{align}
	for a given surface datum $\widehat{p}_F\in\mathcal{P}_\mathrm{p}(F)$ for all $F\in\mathcal{F}_K$.
	\label{loc_pbm:hdg}
\end{problem}
This local problem can be interpreted as a discretized Helmholtz problem defined on $K$ with a non-homogeneous Dirichlet boundary condition on $\partial K$.
The discrete problem is well-posed without any condition, as stated and proved below.
It is worth pointing out that at the continuous level, Helmholtz problems set in $K$ with Dirichlet
boundary conditions might not be well-posed, due to possible resonance frequencies.
Well-posedness at the discrete level follows from the fact that the solutions for the resonances do not belong to $\mathcal{P}_\mathrm{p}(K)$ and $\boldsymbol{\mathcal{P}}_\mathrm{p}(K)$, as it can be seen at the end of the proof.
\begin{theorem} [Well-posedness of the local discrete problem]
	Problem \ref{loc_pbm:hdg} is well-posed.
\end{theorem}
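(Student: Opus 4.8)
The plan is to use that Problem \ref{loc_pbm:hdg} is a square linear system posed on the finite-dimensional space $\mathcal{P}_\mathrm{p}(K)\times\boldsymbol{\mathcal{P}}_\mathrm{p}(K)$, so that well-posedness is equivalent to injectivity. It therefore suffices to show that the homogeneous problem, obtained by setting the datum $\widehat{p}_F=0$ on every face $F\in\mathcal{F}_K$, admits only the trivial solution $(p_K,\bu_K)=(0,\bzero)$.

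First I would derive an energy identity. Testing the first homogeneous equation with $q_K=p_K$ and the second with $\bv_K=\bu_K$, integrating the volume terms $(\bu_K,\grad p_K)_K$ and $(p_K,\grad\cdot\bu_K)_K$ by parts, the boundary contributions $\langle\bn_{K,F}\cdot\bu_K,p_K\rangle_F$ produced by integration by parts cancel against those already present in the formulation. Adding the first resulting identity to the conjugate of the second, and noting that $(\kappa_K\eta_K^{-1}p_K,p_K)_K$ and $(\kappa_K\eta_K\bu_K,\bu_K)_K$ are real (and nonnegative, since $\kappa_K,\eta_K>0$) so that the volume terms are purely imaginary, the real part of the combined identity reduces to $\sum_{F\in\mathcal{F}_K}\langle\mathcal{A}_F^{-1}p_K,p_K\rangle_F=0$. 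By Assumption \ref{assumption}, each $\mathcal{A}_F$, hence each $\mathcal{A}_F^{-1}$, is positive and self-adjoint, so every summand is nonnegative and the sum vanishes only if $p_K|_F=0$ on each face, i.e. $p_K=0$ on $\partial K$.

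With the boundary trace annihilated, the second step recovers the strong form of the local equations. Since $p_K|_{\partial K}=0$, both the terms $\langle\mathcal{A}_F^{-1}p_K,q_K\rangle_F$ and $\langle p_K,\bn_{K,F}\cdot\bv_K\rangle_F$ drop out of the two equations; integrating by parts and testing against the polynomials $q_K=\grad\cdot\bu_K-\im\kappa_K\eta_K^{-1}p_K\in\mathcal{P}_\mathrm{p}(K)$ and $\bv_K=\grad p_K-\im\kappa_K\eta_K\bu_K\in\boldsymbol{\mathcal{P}}_\mathrm{p}(K)$ forces the pointwise relations $\grad\cdot\bu_K=\im\kappa_K\eta_K^{-1}p_K$ and $\grad p_K=\im\kappa_K\eta_K\bu_K$ in $K$. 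Taking the divergence of the latter and inserting the former yields $\Delta p_K+\kappa_K^2 p_K=0$ in $K$, together with $p_K|_{\partial K}=0$.

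The final, genuinely discrete, step is the degree argument that resolves the apparent conflict with continuous resonances. Because $p_K\in\mathcal{P}_\mathrm{p}(K)$, the Laplacian $\Delta p_K$ has degree at most $\mathrm{p}-2$, whereas $-\kappa_K^2 p_K$ has the same degree as $p_K$ (as $\kappa_K>0$), so $\Delta p_K=-\kappa_K^2 p_K$ can hold only if $p_K=0$; then $\grad p_K=\im\kappa_K\eta_K\bu_K$ gives $\bu_K=\bzero$, establishing injectivity. I expect the two substantive points to be, first, extracting $p_K|_{\partial K}=0$ from the positivity of $\mathcal{A}_F$ in the energy identity (the precise role of Assumption \ref{assumption}), and second, the polynomial-degree observation, which is exactly the mechanism---foreshadowed before the statement---by which resonant modes are excluded from $\mathcal{P}_\mathrm{p}(K)$, so that discrete well-posedness holds unconditionally in the frequency.
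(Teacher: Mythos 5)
Your proof is correct and follows essentially the same route as the paper: test with $(p_K,\bu_K)$, integrate by parts and combine with the conjugate to isolate $\langle\mathcal{A}_F^{-1}p_K,p_K\rangle_F=0$, deduce $p_K=0$ on $\partial K$ from Assumption \ref{assumption}, recover the strong first-order system, and conclude from the absence of nonzero polynomial solutions. The only difference is that you make explicit the polynomial-degree argument (comparing $\deg\Delta p_K\le \mathrm{p}-2$ with $\deg(-\kappa_K^2 p_K)$) where the paper simply asserts that no nonzero polynomial solution exists; this is a welcome clarification rather than a deviation.
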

\begin{proof}
	We have to prove that, if $\widehat{p}_F=0$ for all $F\in\mathcal{F}_K$, the unique solution of Problem \ref{loc_pbm:hdg} is $p_K=0$ and $\textbf{u}_K=\textbf{0}$.
	For brevity, the subscripts $K$ and $F$ are omitted for the local fields, the test functions, the outgoing unit normal and the coefficients.
	Taking both equations of Problem \ref{loc_pbm:hdg} with $q=p$ and $\bv=\bu$ gives
	\begin{align}
		-\imath(\kappa\eta^{-1}p,p)_K
		-(\bu,\grad p)_K
		+\langle(\bn\cdot\bu + \mathcal{A}^{-1}p),p\rangle_{\partial K}
		& =0,
		\\
		-\imath(\kappa\eta\bu,\bu)_K-(p,\grad\cdot\bu)_K
		& =0.
	\end{align}
	Integrating by parts in both equations and taking the complex conjugate lead to
	\begin{align}
		\imath(\kappa\eta^{-1}p,p)_K
		+(p,\grad\cdot\bu)_K
		+\langle p,\mathcal{A}^{-1}p\rangle_{\partial K}
		& =0,
		\\
		\imath(\kappa\eta\bu,\bu)_K
		+(\bu,\grad p)_K
		-\langle\bn\cdot\bu,p\rangle_{\partial K}
		& =0.
	\end{align}
	Adding the four previous equations yields $\langle \mathcal{A}^{-1}p,p\rangle_{\partial K}=0$.
	Combined with Assumption \ref{assumption}, this implies that $p=0$ on $\partial K$.
	By using this result in Problem \ref{loc_pbm:hdg}, one has
	\begin{align}
		\left\{
		\begin{aligned}
			\displaystyle
			-\imath(\kappa\eta^{-1}p,q)_{K}+(\div\bu,q)_{K}
			& =0, \\
			-\imath(\kappa\eta\bu,\bv)_{K}+(\grad p,\bv)_{K}
			& =0,
		\end{aligned}
		\right.
	\end{align}
	for all $(q, \bv)\in\mathcal{P}_\mathrm{p}(K )\times\boldsymbol{\mathcal{P}}_\mathrm{p}(K)$.
	We conclude that
	\begin{align}
		-\imath\kappa\eta^{-1}p+\div\bu=0, \\
		-\imath\kappa\eta\bu+\grad p=0,
	\end{align}
	in a strong sense.
	Because there is no non-zero polynomial solution to the previous equations, this yields the result.
\end{proof}

\begin{remark}[Case with upwind fluxes]
	The well-posedness of the local problem in the case of upwind fluxes can be proved in a very similar way by following the same steps, see e.g. \cite{cockburn2009, cockburn2016}.
\end{remark}


\section{Hybridization with transmission variables}
\label{hybridization_CHDG}

Following the approach proposed in \cite{modave2023}, the hybridization is performed by taking the incoming transmission variable as hybrid variable.
As with standard hybridization, this approach does not alter the accuracy of the numerical solution when a direct solver is used, but it changes the properties of the linear system that needs to be solved.

\subsection{CHDG formulations}

An additional variable, denoted $g_h^{\ominus}$, corresponding to the incoming transmission variable is introduced at each face of each element.
This variable belongs to the space
\begin{align}
	G_h := \prod_{K\in\mathcal{T}_h}\prod_{F\in\mathcal{F}_K}\mathcal{P}_\mathrm{p}(F).
\end{align}
Each field of this space has one value associated to each face of each element.
Therefore, there are two values per interior face of the mesh, and one value per boundary face.

For the case with upwind fluxes, the hybrid variable is defined by equation \eqref{eqn:upwind:incVar}, and the numerical fluxes are defined by equation \eqref{eqn:sym:fluxes}.
For the case with symmetric fluxes, equations \eqref{eqn:sym:incVar} and \eqref{eqn:upwind:fluxes} are used instead.
The following formulation is obtained in both cases.
\begin{problem}[CHDG formulation with upwind/symmetric fluxes]
	Find $(p_h,\bu_h,g_h^{\ominus})\in V_h\times\textbf{V}_h\times G_h$ such that, for all $(q_h,\bv_h,\xi_h)\in V_h\times\textbf{V}_h\times G_h$,
	\begin{align}
		\left\{
		\begin{aligned}
			- \imath\big( \kappa\eta^{-1}p_h, q_h \big)_{\mathcal{T}_h}
			- \big(\bu_h, \grad q_h \big)_{\mathcal{T}_h}
			+ \big\langle \bn\cdot\widehat{\bu}(g^{\oplus}(p_h,\bu_h),g^{\ominus}_h), q_h \big\rangle_{\partial\mathcal{T}_h}
			& = 0, \\
			- \imath\big( \kappa\eta\bu_h,\bv_h \big)_{\mathcal{T}_h}
			- \big( p_h, \grad\cdot\bv_h \big)_{\mathcal{T}_h}
			+ \big\langle \widehat{p}(g^{\oplus}(p_h,\bu_h),g^{\ominus}_h), \bn\cdot\bv_h \big\rangle_{\partial\mathcal{T}_h}
			& = 0,
		\end{aligned}
		\right.
	\end{align}
	and
	\begin{align}
		\label{eqn:transvar}
		\langle g^{\ominus}_h-\Pi(g^{\oplus}(p_h,\bu_h)),\xi_h\rangle_{\partial\mathcal{T}_h}=\langle b,\xi_h\rangle_{\partial\mathcal{T}_h},
	\end{align}
	with $g^{\oplus}(p_h,\bu_h)|_{K,F} := p_K+\eta_K\bn_{K,F}\cdot\bu_K$ in the upwind case and $g^{\oplus}(p_h,\bu_h)|_{K,F} := \mathcal{A}^{-1}_F p_K+\bn_{K,F}\cdot\bu_K$ in the symmetric case.
	\label{pbm:chdg}
\end{problem}

To simplify the presentation, we have introduced the \emph{global exchange operator} $\Pi: G_h\rightarrow G_h$ and the \emph{global right-hand side} $b$, whose definitions depend on the choice of numerical fluxes.
For each face $F$ of each element $K$, and for any $g^{\oplus}\in G_h$, they are defined as
\begin{align}
	\Pi(g^{\oplus})|_{K,F} =
	\begin{cases}
		g^{\oplus}_{K',F},
		& \text{if } F\not\subset\partial\Omega, \\
		-g^{\oplus}_{K,F},
		& \text{if } F\subset\Gamma_{\TD},       \\
		g^{\oplus}_{K,F},
		& \text{if } F\subset\Gamma_{\TN},       \\
		0,
		& \text{if } F\subset\Gamma_{\TR},
	\end{cases}
	\quad\text{and}\quad
	b|_{K,F} =
	\begin{cases}
		0,
		& \text{if }F\not\subset\partial\Omega, \\
		2s_{\TD},
		& \text{if } F\subset\Gamma_{\TD},      \\
		-2\eta_Ks_{\TN},
		& \text{if } F\subset\Gamma_{\TN},      \\
		s_{\TR},
		& \text{if } F\subset\Gamma_{\TR},
	\end{cases}
\end{align}
for the upwind fluxes, and as
\begin{align}
	\Pi(g^{\oplus})|_{K,F} =
	\begin{cases}
		g^{\oplus}_{K',F},
		& \text{if } F\not\subset\partial\Omega, \\
		-g^{\oplus}_{K,F},
		& \text{if } F\subset\Gamma_{\TD},       \\
		g^{\oplus}_{K,F},
		& \text{if } F\subset\Gamma_{\TN},       \\
		\mathcal{B}_{K,F,+}^{-1}\mathcal{B}_{K,F,-} g^{\oplus}_{K,F},
		& \text{if } F\subset\Gamma_{\TR},
	\end{cases}
	\quad\text{and}\quad
	b|_{K,F} =
	\begin{cases}
		0,
		& \text{if } F\not\subset\partial\Omega, \\
		2\mathcal{A}_F^{-1}s_{\TD},
		& \text{if } F\subset\Gamma_{\TD},       \\
		-2 s_{\TN},
		& \text{if } F\subset\Gamma_{\TN},       \\
		2\eta_K^{-1}\mathcal{B}_{K,F,+}^{-1}s_{\TR},
		& \text{if } F\subset\Gamma_{\TR},
	\end{cases}
\end{align}
for the symmetric fluxes.


\subsection{Local element-wise discrete problems}

The physical fields $p_h$ and $\bu_h$ are eliminated by solving local element-wise problems where the incoming transmission variable $g_h^{\ominus}$ is considered as a given datum.
In the symmetric case, for each element $K$, the local problem reads:
\begin{problem}
	\label{pbm:loc}
	Find $(p_K,\bu_K)\in \mathcal{P}_\mathrm{p}(K)\times\boldsymbol{\mathcal{P}}_\mathrm{p}(K)$ such that, for all $(q_K,\bv_K)\in\mathcal{P}_\mathrm{p}(K)\times\boldsymbol{\mathcal{P}}_\mathrm{p}(K)$,
	\begin{align}
		\left\{
		\begin{aligned}
			&
			- \imath\big(\kappa_K\eta_K^{-1}p_K,q_K\big)_K
			- \big(\bu_K,\grad q_K\big)_K
			+ \sum_{F\in\mathcal{F}_K}\frac{1}{2}\big\langle(\mathcal{A}_F^{-1}p_K+\bn_{K,F}\cdot\bu_K),q_K\big\rangle_{F}
			\\
			& \quad\quad\quad
			= \sum_{F\in\mathcal{F}_K}\frac{1}{2}\big\langle g^{\ominus}_{K,F},q_K\big\rangle_{F},
			\\
			&
			- \imath\big(\kappa_K\eta_K\bu_K,\bv_K\big)_K
			- \big(p_K,\grad\cdot\bv_K\big)_K
			+ \sum_{F\in\mathcal{F}_K}\frac{1}{2}\big\langle(p_K+\mathcal{A}_F(\bn_{K,F}\cdot\bu_K)),\bn_{K,F}\cdot\bv_K\big\rangle_{F}
			\\
			& \quad\quad\quad
			= - \sum_{F\in\mathcal{F}_K}\frac{1}{2}\big\langle \mathcal{A}_Fg^{\ominus}_{K,F},\bn_{K,F}\cdot\bv_K\big\rangle_{F},
		\end{aligned}
		\right.
	\end{align}
	for a given surface datum $g^{\ominus}_{K,F}\in\mathcal{P}_\mathrm{p}(F)$ for all $F\in\mathcal{F}_K$.
\end{problem}
This local discrete problem corresponds to a discretized Helmholtz problem defined on $K$ with a non-homogeneous Robin boundary condition on $\partial K$.
This problem is well-posed without any condition, as stated and proved hereafter. In contrast to the standard HDG framework, these local problems are also well-posed at the continuous level.
\begin{theorem} [Well-posedness of the local discrete problem]
	\label{thr:well-posedness}
	Problem \ref{pbm:loc} is well-posed.
\end{theorem}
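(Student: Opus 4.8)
The plan is to exploit that Problem~\ref{pbm:loc} is a square linear system posed on the finite-dimensional space $\mathcal{P}_\mathrm{p}(K)\times\boldsymbol{\mathcal{P}}_\mathrm{p}(K)$, so that well-posedness is equivalent to injectivity. Following the template of the preceding theorem for Problem~\ref{loc_pbm:hdg}, I would set the datum to zero, i.e.\ take $g^{\ominus}_{K,F}=0$ for all $F\in\mathcal{F}_K$, and prove that the only solution is $p_K=0$ and $\bu_K=\bzero$. Throughout I would drop the subscripts $K$ and $F$ and write $\mathcal{A}$ for the face operators, as in the earlier proof.

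The core is an energy identity obtained by testing with $q_K=p_K$ and $\bv_K=\bu_K$. This produces two scalar relations; I would then integrate by parts and take complex conjugates to form two ``adjoint'' relations, exactly as for the HDG local problem. Adding the four relations, the purely imaginary volume contributions $-\imath(\kappa\eta^{-1}p,p)_K$ and $-\imath(\kappa\eta\bu,\bu)_K$ cancel against their conjugates, the volume coupling terms $(\bu,\grad p)_K$ and $(p,\grad\cdot\bu)_K$ cancel against their conjugates, and---this is the delicate bookkeeping step---the mixed surface terms $\tfrac12\langle\bn\cdot\bu,p\rangle_{\partial K}$ arising from the two fluxes combine, through the divergence identity $\langle\bn\cdot\bu,p\rangle_{\partial K}=(\grad\cdot\bu,p)_K+(\bu,\grad p)_K$, with the volume terms and cancel as well. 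What survives is the clean identity
\begin{align}
	\langle\mathcal{A}^{-1}p,p\rangle_{\partial K}+\langle\mathcal{A}(\bn\cdot\bu),\bn\cdot\bu\rangle_{\partial K}=0.
\end{align}

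The remaining steps are then routine. By Assumption~\ref{assumption}, $\mathcal{A}$ and hence $\mathcal{A}^{-1}$ are positive and self-adjoint, so both terms above are real and nonnegative; their sum vanishing forces each to vanish, whence $p=0$ and $\bn\cdot\bu=0$ on $\partial K$. Inserting these boundary values back into Problem~\ref{pbm:loc} with $g^{\ominus}=0$ and integrating by parts removes every surface contribution and yields the strong-form relations $-\imath\kappa\eta^{-1}p+\div\bu=0$ and $-\imath\kappa\eta\bu+\grad p=0$ on $K$. Eliminating $\bu$ gives $\Delta p+\kappa^2 p=0$ with $\kappa>0$; since for a nonzero polynomial $p$ the term $\Delta p$ has strictly lower degree than $\kappa^2 p$, this forces $p=0$, and then $\bu=\bzero$. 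I expect the main obstacle to be precisely the surface-term bookkeeping in the energy identity: one must track the factor $\tfrac12$ weighting each flux and verify that the mixed terms $\langle\bn\cdot\bu,p\rangle_{\partial K}$ cancel exactly against the integrated-by-parts volume terms, leaving only the two positive quadratic forms. The genuinely new ingredient compared with the HDG case is the appearance of $\langle\mathcal{A}(\bn\cdot\bu),\bn\cdot\bu\rangle_{\partial K}$, which requires the positivity of $\mathcal{A}$ itself rather than only that of $\mathcal{A}^{-1}$.
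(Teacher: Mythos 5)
Your proposal is correct and follows essentially the same route as the paper: test with $(p,\bu)$, integrate by parts and conjugate, add the four relations so that the volume and mixed surface terms cancel, deduce $\langle\mathcal{A}^{-1}p,p\rangle_{\partial K}+\langle\mathcal{A}(\bn\cdot\bu),\bn\cdot\bu\rangle_{\partial K}=0$, and conclude via positivity and the absence of non-zero polynomial solutions of the strong-form equations. The only cosmetic difference is that you justify the last step by a leading-degree argument on $\Delta p+\kappa^2p=0$, whereas the paper simply invokes the over-determined homogeneous Dirichlet and Neumann data; both are fine.
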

\begin{proof}
	We simply have to prove that, if $g^{\ominus}_{K,F}=0$ for all $F\in\mathcal{F}_K$, the unique solution of Problem \ref{pbm:loc} is $p_K=0$ and $\textbf{u}_K=\textbf{0}$.
	For brevity, the subscripts $K$ and $F$ are omitted for the local fields, the test functions, the outgoing unit normal and the coefficients.
	Taking both equations of Problem \ref{pbm:loc} with $q=p$ and $\bv=\bu$ gives
	\begin{align}
		- \imath(\kappa\eta^{-1}p,p)_K
		- (\bu,\grad p)_K
		+ \sum_{F\in\mathcal{F}_K}\frac{1}{2}\langle(\mathcal{A}^{-1}p+\bn\cdot\bu),p\rangle_{F}
		& = 0,
		\\
		- \imath(\kappa\eta\bu,\bu)_K
		- (p,\grad\cdot\bu)_K
		+ \sum_{F\in\mathcal{F}_K}\frac{1}{2}\langle(p+\mathcal{A}(\bn\cdot\bu)),\bn\cdot\bu\rangle_{F}
		& = 0.
	\end{align}
	Integrating by parts in both equations and taking the complex conjugate lead to
	\begin{align}
		\imath(\kappa\eta^{-1}p,p)_K
		+ (p,\grad\cdot\bu)_K
		+ \sum_{F\in\mathcal{F}_K}\frac{1}{2}\langle p,(\mathcal{A}^{-1}p-\bn\cdot\bu)\rangle_{F}
		& = 0,
		\\
		\imath(\kappa\eta\bu,\bu)_K
		+ (\bu,\grad p)_K
		+ \sum_{F\in\mathcal{F}_K}\frac{1}{2}\langle \bn\cdot\bu,(-p+\mathcal{A}(\bn\cdot\bu))\rangle_{F}
		& = 0.
	\end{align}
	Adding the four previous equations yields
	\begin{align}
		\sum_{F\in\mathcal{F}_K}\left(\frac{1}{2}\langle\mathcal{A}^{-1}p,p\rangle_{F}+\frac{1}{2}\langle\mathcal{A}(\bn\cdot\bu),\bn\cdot\bu\rangle_{F}+\frac{1}{2}\langle p,\mathcal{A}^{-1}p\rangle_{F}+\frac{1}{2}\langle\bn\cdot\bu,\mathcal{A}(\bn\cdot\bu)\rangle_{F}\right) = 0.
	\end{align}
	Since $\mathcal{A}$ and $\mathcal{A}^{-1}$ are self-adjoint, we have
	\begin{align}
		\langle \mathcal{A}^{-1}p,p\rangle_{\partial K}+\langle \mathcal{A}(\bn\cdot\bu),\bn\cdot\bu\rangle_{\partial K} = 0,
	\end{align}
	which gives $p=0$ and $\bn\cdot\bu=0$ on $\partial K$, thanks to the positivity of $\mathcal{A}$ and $\mathcal{A}^{-1}$.
	By using these boundary conditions in Problem \ref{pbm:loc}, we have that the fields should be a solution of the strong problem.
	Because there is no solution with both homogeneous Neumann and Dirichlet boundary conditions, this yields the result.
\end{proof}

\begin{remark}[Case with upwind fluxes]
	It is possible to write a local element-wise discrete problem similar to Problem \ref{pbm:loc} in the case of upwind fluxes and prove its well-posedness by following the steps of the proof of Theorem \ref{thr:well-posedness}.
	The task can be achieved with a similar reasoning.
	For brevity, the definition of the problem and the proof of its well-posedness are omitted.
\end{remark}

\subsection{Abstract form of the hybridized system}
The hybridized CHDG problem can be written in a convenient abstract form by introducing the \emph{global scattering operator} $\TS:G_h\rightarrow G_h$ defined such that, for each face $F$ of each element $K$,
\begin{align}
	\TS(g_h^{\ominus})|_{K,F} := \mathcal{A}_F^{-1}p_K(g_h^{\ominus})+\bn_{K,F}\cdot\bu_K(g_h^{\ominus}),
	\label{scat_oper}
\end{align}
where $(p_K(g_h^{\ominus}),\bu_K(g_h^{\ominus}))$ is the solution of Problem \ref{pbm:loc} with the incoming transmission variable $(g_{K,F}^{\ominus})_{F\in\mathcal{F}_K}$ contained in $g_h^{\ominus}$ as a given surface datum.
This operator can be interpreted as an ``\emph{incoming transmission variable to outgoing transmission variable}'' operator.

By using the operator $\TS$, Problem \ref{pbm:loc} is rewritten as:
\begin{problem}
	\label{pbm:red1}
	Find $g_h^{\ominus}\in G_h$ such that, for all $\xi_h\in G_h$,
	\begin{align}
		\big\langle g_h^{\ominus},\xi_h\big\rangle_{\partial\mathcal{T}_h}-\big\langle \Pi(\TS(g_h^{\ominus})),\xi_h\big\rangle_{\partial\mathcal{T}_h}=\big\langle b,\xi_h\big\rangle_{\partial\mathcal{T}_h}.
	\end{align}
\end{problem}
We introduce the \emph{global projected right-hand side} $b_h := P_hb\in G_h$, where $P_h: L^2(\partial\mathcal{T}_h)\rightarrow G_h$ is the projection operator defined such that $\big\langle P_hb,\xi_h\big\rangle_{\partial\mathcal{T}_h}=\big\langle b,\xi_h\big\rangle_{\partial\mathcal{T}_h}$ for all $\xi_h\in G_h$.
Problem \ref{pbm:red1} can then be rewritten as:
\begin{problem}
	\label{pbm:red2}
	Find $g_h^{\ominus}\in G_h$ such that
	\begin{align}
		(\TI-\Pi\TS)g_h^{\ominus}=b_h,
	\end{align}
\end{problem}
where $\TI$ is the identity operator on $G_h$.
Problem \ref{pbm:red1} and Problem \ref{pbm:red2} are equivalent to Problem \ref{pbm:chdg} because the element-wise local problems are well-posed.


\subsection{Strict contraction of the operator {$\Pi\TS$} \emph{(case with symmetric fluxes)}}

An interesting property of the CHDG formulation when using symmetric fluxes satisfying Assumption \ref{assumption} is that the operator $\Pi\TS$ is a strict contraction.
As a consequence, Problem \ref{pbm:red2} is always well-posed, and it can be solved with the fixed-point iteration without relaxation.

Properties of $\TS$ and $\Pi$ are proved by using norms associated to $G_h$ and $\bigoplus_{F\in\mathcal{F}_K}\mathcal{P}_\mathrm{p}(F)$ defined as
\begin{align}
	\|g_h^{\ominus}\| := \sqrt{\sum_{K\in\mathcal{T}_h}\sum_{F\in\mathcal{F}_K}\|g_{K,F}^{\ominus}\|^2_F}
	\qquad\text{and}\qquad
	\|u\|_{\partial K} := \sqrt{\sum_{F\in\mathcal{F}_K}\|u\|_F^2},
\end{align}
where $\|\cdot\|_F$ is the norm of $L^2(F)$ induced by $\mathcal{A}_F$, namely
\begin{align}
	\|u\|_F := \sqrt{ \big\langle\mathcal{A}_F u,u \big\rangle_F}.
\end{align}

\begin{lemma}
	(i) The solution of Problem \ref{pbm:loc} verifies
	\begin{multline}
		\sum_{F\in\mathcal{F}_K}\|\mathcal{A}_F^{-1}p_K+\bn_{K,F}\cdot\bu_K\|^2_F
		+\sum_{F\in\mathcal{F}_K}\|\mathcal{A}_F^{-1}p_K-\bn_{K,F}\cdot\bu_K-g^{\ominus}_{K,F}\|^2_F= \sum_{F\in\mathcal{F}_K}\|g^{\ominus}_{K,F}\|^2_F.\label{eqn:lem2}
	\end{multline}
	(ii) The second term on the left-hand side of \eqref{eqn:lem2} vanishes if and only if $g^{\ominus}_{K,F}=0$.
	\label{well-pos-loc2}
\end{lemma}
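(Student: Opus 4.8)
The strategy is to reproduce the energy computation in the proof of Theorem~\ref{thr:well-posedness}, but to retain the incoming datum $g^{\ominus}_{K,F}$ on the right-hand side rather than setting it to zero. Recall that the first summand in~\eqref{eqn:lem2} is $\sum_{F\in\mathcal{F}_K}\|g^{\oplus}_{K,F}\|_F^2$, with $g^{\oplus}_{K,F}=\mathcal{A}_F^{-1}p_K+\bn_{K,F}\cdot\bu_K$, while the quantity inside the second summand is the companion combination $\mathcal{A}_F^{-1}p_K-\bn_{K,F}\cdot\bu_K$ minus the datum $g^{\ominus}_{K,F}$.

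For part (i), I would test the two equations of Problem~\ref{pbm:loc} with $q_K=p_K$ and $\bv_K=\bu_K$, integrate by parts, and take complex conjugates, exactly as in Theorem~\ref{thr:well-posedness}. Summing the four resulting scalar identities, the $\imath$-weighted mass terms cancel against their conjugates and the mixed terms $(\bu_K,\grad p_K)_K$ and $(p_K,\grad\cdot\bu_K)_K$ cancel pairwise (the coefficients $\kappa_K,\eta_K$ being real), leaving the boundary power balance
\[
\sum_{F\in\mathcal{F}_K}\big(\langle\mathcal{A}_F^{-1}p_K,p_K\rangle_F+\langle\mathcal{A}_F(\bn_{K,F}\cdot\bu_K),\bn_{K,F}\cdot\bu_K\rangle_F\big)=\sum_{F\in\mathcal{F}_K}\Re\big(\langle p_K,g^{\ominus}_{K,F}\rangle_F-\langle\mathcal{A}_F g^{\ominus}_{K,F},\bn_{K,F}\cdot\bu_K\rangle_F\big).
\]
The remaining step is purely algebraic: expanding the three squared $\|\cdot\|_F$-norms appearing in~\eqref{eqn:lem2} by means of the inner product $\langle\mathcal{A}_F u,v\rangle_F$ associated with $\|\cdot\|_F$, and using the self-adjointness of $\mathcal{A}_F$ (Assumption~\ref{assumption}), one checks that $\|g^{\oplus}_{K,F}\|_F^2+\|\mathcal{A}_F^{-1}p_K-\bn_{K,F}\cdot\bu_K-g^{\ominus}_{K,F}\|_F^2-\|g^{\ominus}_{K,F}\|_F^2$ equals exactly twice the difference of the two sides of the power balance, hence sums to zero over $F$; this is~\eqref{eqn:lem2}.

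For part (ii), the forward implication is immediate: if $g^{\ominus}_{K,F}=0$ for all $F$, then Theorem~\ref{thr:well-posedness} gives $p_K=0$ and $\bu_K=\bzero$, so the second summand vanishes. For the converse, vanishing of the second summand together with the positivity of each $\mathcal{A}_F$ forces $g^{\ominus}_{K,F}=\mathcal{A}_F^{-1}p_K-\bn_{K,F}\cdot\bu_K$ on every face. I would substitute this datum back into Problem~\ref{pbm:loc}: the face contributions then collapse, the first equation to $\langle\bn_{K,F}\cdot\bu_K,q_K\rangle_F$ and the second to $\langle p_K,\bn_{K,F}\cdot\bv_K\rangle_F$, and integration by parts reduces the system to the source-free strong equations $-\imath\kappa_K\eta_K^{-1}p_K+\div\bu_K=0$ and $-\imath\kappa_K\eta_K\bu_K+\grad p_K=0$. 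As at the end of the proof of Theorem~\ref{thr:well-posedness}, no nonzero polynomial solves this system, whence $p_K=0$, $\bu_K=\bzero$, and therefore $g^{\ominus}_{K,F}=0$.

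The main obstacle I anticipate is the bookkeeping in part (i): carrying the boundary terms correctly through the integration by parts and the conjugation so that all volume contributions cancel, and then recognizing the resulting real boundary identity as the polarized form of~\eqref{eqn:lem2} through the self-adjointness of $\mathcal{A}_F$. The only conceptual point in part (ii) is to observe that when the residual vanishes the incoming datum coincides with $\mathcal{A}_F^{-1}p_K-\bn_{K,F}\cdot\bu_K$, so that the numerical fluxes reduce to the interior traces and the local problem collapses onto the homogeneous interior equations.
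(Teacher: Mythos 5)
Your proposal is correct and follows essentially the same route as the paper's proof: test with $q_K=p_K$, $\bv_K=\bu_K$, integrate by parts and conjugate, sum the four identities to obtain the boundary energy balance, and then identify \eqref{eqn:lem2} as an algebraic rearrangement of that balance via the self-adjointness of $\mathcal{A}_F$ (the paper packages this last step as a polarization identity for $2\Re\langle\mathcal{A}_F g^{\ominus}_{K,F},\mathcal{A}_F^{-1}p_K-\bn_{K,F}\cdot\bu_K\rangle_F$, while you expand the squares directly, which is the same computation). Part (ii) also matches the paper: the vanishing of the second summand forces $g^{\ominus}_{K,F}=\mathcal{A}_F^{-1}p_K-\bn_{K,F}\cdot\bu_K$, the face terms collapse to the plain traces, and the absence of nonzero polynomial solutions of the strong equations concludes.
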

\begin{proof}
	For brevity, the subscripts $K$ and $F$ are omitted for the local fields, the test functions, the unit outgoing normal, the surface data and the coefficients.
	
	\emph{(i)} Taking both equations of Problem \ref{pbm:loc} with $q=p$ and $\bv=\bu$ gives
	\begin{align}
		-\imath(\kappa\eta^{-1}p,p)_K
		-(\bu,\grad p)_K
		+ \frac{1}{2} \big\langle \mathcal{A}(\mathcal{A}^{-1}p+\bn\cdot\bu),\mathcal{A}^{-1}p\big\rangle_{\partial K}
		&
		= \frac{1}{2} \big\langle \mathcal{A}g^{\ominus},\mathcal{A}^{-1}p\big\rangle_{\partial K},
		\\
		-\imath(\kappa\eta\bu,\bu)_K
		-(p,\grad\cdot\bu)_K
		+ \frac{1}{2} \big\langle \mathcal{A}(\mathcal{A}^{-1}p+\bn\cdot\bu),\bn\cdot\bu\big\rangle_{\partial K}
		&
		=- \frac{1}{2} \big\langle \mathcal{A}g^{\ominus},\bn\cdot\bu\big\rangle_{\partial K}.
	\end{align}
	Integrating by parts in both equations and taking the complex conjugate lead to
	\begin{align}
		\imath(\kappa\eta^{-1}p,p)_K
		+ (p,\div\bu)_K
		+ \frac{1}{2} \big\langle \mathcal{A}^{-1}p,\mathcal{A}(\mathcal{A}^{-1}p-\bn\cdot\bu)\big\rangle_{\partial K}
		&
		= \frac{1}{2} \big\langle \mathcal{A}^{-1}p,\mathcal{A}g^{\ominus}\big\rangle_{\partial K},
		\\
		\imath(\kappa\eta\bu,\bu)_K
		+ (\bu,\grad p)_K
		- \frac{1}{2} \big\langle \bn\cdot\bu,\mathcal{A}(\mathcal{A}^{-1}p-\bn\cdot\bu)\big\rangle_{\partial K}
		&
		= - \frac{1}{2} \big\langle \bn\cdot\bu,\mathcal{A}g^{\ominus}\big\rangle_{\partial K}.
	\end{align}
	Adding the four previous equations and multiplying by two yield
	\begin{multline}
		\big\langle \mathcal{A}(\mathcal{A}^{-1}p+\bn\cdot\bu),\mathcal{A}^{-1}p+\bn\cdot\bu\big\rangle_{\partial K}
		+ \big\langle \mathcal{A}^{-1}p-\bn\cdot\bu,\mathcal{A}(\mathcal{A}^{-1}p-\bn\cdot\bu)\big\rangle_{\partial K}
		\\
		= \big\langle \mathcal{A}g^{\ominus},\mathcal{A}^{-1}p-\bn\cdot\bu\big\rangle_{\partial K}
		+ \big\langle (\mathcal{A}^{-1}p-\bn\cdot\bu),\mathcal{A}g^{\ominus}\big\rangle_{\partial K}.
		\label{eqn:proof:intermResu}
	\end{multline}
	Using the following identities for rewriting the right-hand side,
	\begin{align}
		&
		\big\langle \mathcal{A}g^{\ominus},\mathcal{A}^{-1}p-\bn\cdot\bu\big\rangle_{\partial K}
		+ \big\langle (\mathcal{A}^{-1}p-\bn\cdot\bu),\mathcal{A}g^{\ominus}\big\rangle_{\partial K}
		\\
		& \qquad\qquad
		=
		\big\langle \mathcal{A}(\mathcal{A}^{-1}p-\bn\cdot\bu),\mathcal{A}^{-1}p-\bn\cdot\bu\big\rangle_{\partial K}
		+ \big\langle \mathcal{A}g^{\ominus},g^{\ominus}\big\rangle_{\partial K}
		\\
		& \qquad\qquad\qquad\qquad
		- \big\langle \mathcal{A}(\mathcal{A}^{-1}p-\bn\cdot\bu-g^{\ominus}),\mathcal{A}^{-1}p-\bn\cdot\bu-g^{\ominus}\big\rangle_{\partial K}
		\\
		& \qquad\qquad
		= \|\mathcal{A}^{-1}p-\bn\cdot\bu\|^2_{\partial K}
		- \|\mathcal{A}^{-1}p-\bn\cdot\bu-g^{\ominus}\|^2_{\partial K}
		+ \|g^{\ominus}\|^2_{\partial K},
	\end{align}
	equation \eqref{eqn:proof:intermResu} becomes
	\begin{multline}
		\|\mathcal{A}^{-1}p+\bn\cdot\bu\|^2_{\partial K}
		+ \|\mathcal{A}^{-1}p-\bn\cdot\bu\|^2_{\partial K}
		\\
		= \|\mathcal{A}^{-1}p-\bn\cdot\bu\|^2_{\partial K}
		- \|\mathcal{A}^{-1}p-\bn\cdot\bu-g^{\ominus}\|^2_{\partial K}
		+ \|g^{\ominus}\|^2_{\partial K},
	\end{multline}
	which gives the result (\ref{eqn:lem2}).
	
	\emph{(ii)} If the second term on the left-hand side of \eqref{eqn:lem2} vanishes, then $g^{\ominus}=\mathcal{A}^{-1}p-\bn\cdot\bu$ on $\partial K$.
	Using this relation in Problem \ref{pbm:loc}, we see that $p$ and $\bu$ must satisfy
	\begin{align}
		-\i \big( \kappa\eta^{-1} p,q \big)_K
		- \big( \bu,\grad q \big)_K
		+ \big\langle \bn\cdot\bu,q \big\rangle_{\partial K}
		& = 0, \\
		-\i \big( \kappa\eta \bu,\bv \big)_K
		- \big(p,\grad\cdot\bv\big)_K
		+ \big\langle p,\bn\cdot\bv \big\rangle_{\partial K}
		& = 0,
	\end{align}
	for all $q\in\mathcal{P}_\mathrm{p}(K)$ and $\bv\in\boldsymbol{\mathcal{P}}_\mathrm{p}(K)$, and integration by parts shows that $p$ and $\bu$ solve the Helmholtz equation in strong form.
	Because there is no non-zero polynomial solution to the previous equations, meaning that $p=0$ and $\bu=\mathbf{0}$, and then $g^{\ominus}=0$, this yields the result.
	The converse statement is direct, because the local problem is well-posed.
\end{proof}

\begin{theorem}
	\label{thm:opS:cont}
	The scattering operator $\TS$ is a strict contraction, i.e.
	\begin{align}
		\|\TS(g_h^{\ominus})\|<\|g_h^{\ominus}\|, \quad \forall g_h^{\ominus}\in G_h\backslash\{0\}.
	\end{align}
\end{theorem}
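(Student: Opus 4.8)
The plan is to deduce the result directly from Lemma \ref{well-pos-loc2} by summing its per-element energy identity over the whole mesh. First I would note that, by the definition \eqref{scat_oper} of the scattering operator together with the definition of the norm on $G_h$, one has $\TS(g_h^{\ominus})|_{K,F} = \mathcal{A}_F^{-1}p_K+\bn_{K,F}\cdot\bu_K$, where $(p_K,\bu_K)$ solves Problem \ref{pbm:loc} with the datum $(g^{\ominus}_{K,F})_{F\in\mathcal{F}_K}$. Consequently
\[
	\|\TS(g_h^{\ominus})\|^2 = \sum_{K\in\mathcal{T}_h}\sum_{F\in\mathcal{F}_K}\|\mathcal{A}_F^{-1}p_K+\bn_{K,F}\cdot\bu_K\|^2_F ,
\]
which is exactly the first sum on the left-hand side of identity \eqref{eqn:lem2}.

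Next I would sum the identity \eqref{eqn:lem2} of Lemma \ref{well-pos-loc2}(i) over all elements $K\in\mathcal{T}_h$. Since summing the right-hand side over $K$ reproduces $\|g_h^{\ominus}\|^2$ by definition of the norm, this yields
\[
	\|\TS(g_h^{\ominus})\|^2 + \sum_{K\in\mathcal{T}_h}\sum_{F\in\mathcal{F}_K}\|\mathcal{A}_F^{-1}p_K-\bn_{K,F}\cdot\bu_K-g^{\ominus}_{K,F}\|^2_F = \|g_h^{\ominus}\|^2 .
\]
Writing $R$ for the remaining (manifestly nonnegative) sum of squared defect terms, this rearranges to $\|\TS(g_h^{\ominus})\|^2 = \|g_h^{\ominus}\|^2 - R$, which already gives the non-strict bound $\|\TS(g_h^{\ominus})\|\le\|g_h^{\ominus}\|$.

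Finally I would upgrade this to a strict inequality using Lemma \ref{well-pos-loc2}(ii). If $g_h^{\ominus}\ne 0$, then there is at least one element $K$ whose datum $(g^{\ominus}_{K,F})_{F\in\mathcal{F}_K}$ is not identically zero; by part (ii) the associated per-element defect term is then strictly positive, so $R>0$. Hence $\|\TS(g_h^{\ominus})\|^2 < \|g_h^{\ominus}\|^2$ for every $g_h^{\ominus}\in G_h\backslash\{0\}$, which is the asserted strict contraction.

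I expect this argument to be essentially bookkeeping once Lemma \ref{well-pos-loc2} is available, since the local identity does all the analytic work. The only point requiring genuine care is the \emph{strictness}: one must invoke part (ii) to guarantee that a nonzero global datum forces a strictly positive defect on at least one element, rather than merely a nonnegative one. I therefore anticipate that localizing the nonvanishing of $g_h^{\ominus}$ to a single element and applying (ii) there will be the only (minor) obstacle, with no substantial difficulty beyond it.
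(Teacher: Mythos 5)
Your proof is correct and follows essentially the same route as the paper's: both arguments rest entirely on the energy identity of Lemma \ref{well-pos-loc2}, identify the first sum there with $\|\TS(g_h^{\ominus})|_{K,F}\|_F^2$ via \eqref{scat_oper}, and sum over the elements. If anything, your treatment of strictness is slightly more careful than the paper's, which asserts a per-element strict inequality that would fail on elements whose local datum happens to vanish identically; you instead obtain the non-strict bound globally and then invoke part (ii) on at least one element with nonzero datum, which is the cleaner way to secure $R>0$.
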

\begin{proof}
	Let $g_h^{\ominus}\in G_h\backslash\{0\}$.
	By Lemma \ref{well-pos-loc2}, one has
	\begin{align}
		\sum_{F\in\mathcal{F}_K}\|\mathcal{A}_F^{-1}p_K+\bn_{K,F}\cdot\bu_K\|^2_F<\sum_{F\in\mathcal{F}_K}\|g_{K,F}^{\ominus}\|^2_F.
	\end{align}
	Note that the equality cannot happen because $g_h^{\ominus}\ne0$.
	Then, by using the definition of $\TS$ in equation \eqref{scat_oper}, one has
	\begin{align}
		\sum_{F\in\mathcal{F}_K}\|\TS(g_h^{\ominus})|_{K,F}\|^2_F<\sum_{F\in\mathcal{F}_K}\|g_{K,F}^{\ominus}\|^2_F.
	\end{align}
	Summing this estimate over all $K\in\mathcal{T}_h$ gives the result.
\end{proof}

\begin{theorem}
	\label{thm:opP:cont}
	(i) If $\Gamma_{\TR}=\emptyset$, $\Pi$ is an involution, i.e.~$\Pi^2=\TI$, and an isometry, i.e.
	\begin{align}
		\|\Pi(g_h^{\ominus})\|=\|g_h^{\ominus}\|, \quad \forall g_h^{\ominus}\in G_h.
	\end{align}
	(ii) If $\Gamma_{\TR}\not=\emptyset$, the exchange operator $\Pi$ is a contraction, i.e.
	\begin{align}
		\|\Pi(g_h^{\ominus})\|\le\|g_h^{\ominus}\|, \quad \forall g_h^{\ominus}\in G_h.
	\end{align}
	The inequality is strict for all $g_h^{\ominus}\in G_h$ such that there is at least one face $F\subset\Gamma_\TR$ where $g_{K,F}^\ominus$ is non-zero.
\end{theorem}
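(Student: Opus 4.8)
The plan is to reduce both global statements to a face-by-face analysis, exploiting that the norm $\|\cdot\|$ decomposes as a sum of the local contributions $\|\cdot\|_F^2$ over all pairs $(K,F)$, and that $\Pi$ acts ``diagonally'' on this decomposition: it either swaps the $(K,F)$-component with the opposite $(K',F)$-component, or transforms the $(K,F)$-component by a purely local operator. Throughout I would use that $\mathcal{A}_F$, hence the weighted norm $\|\cdot\|_F$, depends only on the face $F$ and not on the adjacent element.

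For part (i) I would first establish $\Pi^2=\TI$ by checking each face type separately. On an interior face $F$ shared by $K$ and $K'$, applying $\Pi$ twice sends the $(K,F)$-component to $g^{\oplus}_{K',F}$ and then back to $g^{\oplus}_{K,F}$, since $K$ is precisely the neighbour of $K'$ across $F$; on a Dirichlet face the two sign changes cancel; on a Neumann face $\Pi$ is already the identity. For the isometry I would expand $\|\Pi(g_h^{\ominus})\|^2$ as the sum over $(K,F)$ of $\|\Pi(g_h^{\ominus})|_{K,F}\|_F^2$: each interior face contributes $\|g_{K',F}^{\ominus}\|_F^2+\|g_{K,F}^{\ominus}\|_F^2$, i.e. the same two terms as in $\|g_h^{\ominus}\|^2$ but interchanged, while the Dirichlet contribution is unchanged because the sign is killed by the quadratic norm, and the Neumann contribution is trivially unchanged. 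Summation then gives $\|\Pi(g_h^{\ominus})\|=\|g_h^{\ominus}\|$.

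For part (ii) the interior, Dirichlet and Neumann faces contribute exactly as in part (i) and are norm-preserving, so everything reduces to the Robin faces, where $\Pi(g_h^{\ominus})|_{K,F}=\mathcal{B}_{K,F,+}^{-1}\mathcal{B}_{K,F,-}\,g_{K,F}^{\ominus}$ with $\mathcal{B}_{K,F,\pm}=1\pm\eta_K^{-1}\mathcal{A}_F$. The crux is therefore to show that $T:=\mathcal{B}_{K,F,+}^{-1}\mathcal{B}_{K,F,-}$ is a strict contraction for $\|\cdot\|_F$. I would argue algebraically rather than by diagonalising $\mathcal{A}_F$: setting $h:=Tg$, the relation $\mathcal{B}_{K,F,+}h=\mathcal{B}_{K,F,-}g$ rearranges to $g-h=\eta_K^{-1}\mathcal{A}_F(g+h)$. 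Writing $a:=g+h$ and $d:=g-h$ and expanding $\|g\|_F^2-\|h\|_F^2=\langle\mathcal{A}_F g,g\rangle_F-\langle\mathcal{A}_F h,h\rangle_F$, the diagonal terms cancel and, using the self-adjointness of $\mathcal{A}_F$, the surviving cross terms collapse to $\Re\langle\mathcal{A}_F a,d\rangle_F$. Substituting $d=\eta_K^{-1}\mathcal{A}_F a$ yields
\begin{align}
	\|g\|_F^2-\|Tg\|_F^2=\eta_K^{-1}\langle\mathcal{A}_F a,\mathcal{A}_F a\rangle_F=\eta_K^{-1}\|\mathcal{A}_F(g+Tg)\|_{L^2(F)}^2\ge0,
\end{align}
which is non-negative since $\eta_K>0$, and strictly positive unless $\mathcal{A}_F(g+Tg)=0$; invertibility of $\mathcal{A}_F$ then forces $g+Tg=0$, and combined with $g-Tg=\eta_K^{-1}\mathcal{A}_F(g+Tg)=0$ this forces $g=0$.

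Assembling the face contributions gives $\|\Pi(g_h^{\ominus})\|^2\le\|g_h^{\ominus}\|^2$, with equality precisely when all Robin-face terms vanish, hence strict inequality as soon as some $g_{K,F}^{\ominus}$ with $F\subset\Gamma_\TR$ is non-zero. I expect the main obstacle to be exactly the Robin-face estimate: the temptation is to diagonalise $\mathcal{A}_F$, but it is cleaner and more robust to find the algebraic identity $g-Tg=\eta_K^{-1}\mathcal{A}_F(g+Tg)$ that makes the quadratic difference reduce to a manifestly non-negative, and manifestly definite, term. The remaining care is purely bookkeeping: tracking the weighted versus unweighted inner products, and using \emph{only} the positivity and self-adjointness of $\mathcal{A}_F$ from Assumption~\ref{assumption} together with $\eta_K>0$.
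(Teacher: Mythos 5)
Your proof is correct and follows essentially the same route as the paper's: a face-by-face reduction in which the interior, Dirichlet and Neumann faces are norm-preserving and only the Robin faces are non-trivial, where the strict contraction of $\mathcal{B}_{K,F,+}^{-1}\mathcal{B}_{K,F,-}$ in the $\mathcal{A}_F$-weighted norm is extracted from the positivity and self-adjointness of $\mathcal{A}_F$ together with $\eta_K>0$. The only (cosmetic) difference is that you organize the Robin-face algebra via the sum and difference $g\pm Tg$ and the identity $g-Tg=\eta_K^{-1}\mathcal{A}_F(g+Tg)$, whereas the paper substitutes $\xi=\mathcal{B}_{K,F,+}^{-1}g$ and compares the expansions of $\|(1\pm\eta_K^{-1}\mathcal{A}_F)\xi\|_F^2$ directly; both reduce to the same non-negative cross term.
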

\begin{proof}
	\emph{(i)} The result is a straightforward consequence of the definition of $\Pi$ for interior faces and for boundary faces belonging to $\Gamma_{\TD}$ or $\Gamma_{\TN}$.
	
	\emph{(ii)}
	For every boundary face $F\subset\Gamma_\TR$ belonging to an element $K$, we have to prove that
	\begin{align}
		\|(1+\eta_K^{-1}\mathcal{A}_F)^{-1}(1-\eta_K^{-1}\mathcal{A}_F) g\|_F \leq \|g\|_F,
		\qquad\forall g\in\mathcal{P}_\mathrm{p}(F).
	\end{align}
	For any given $g\in\mathcal{P}_\mathrm{p}(F)$, the inequality holds if and only if
	\begin{align}
		\|(1-\eta_K^{-1}\mathcal{A}_F) \xi\|_F \leq \|(1+\eta_K^{-1}\mathcal{A}_F) \xi\|_F,
	\end{align}
	with $\xi := (1+\eta_K^{-1}\mathcal{A}_F)^{-1}g$.
	Because
	\begin{align}
		\|(1+\eta_K^{-1}\mathcal{A}_F) \xi\|_F^2
		&
		= \|\xi\|_F^2
		+ \|(\eta_K^{-1}\mathcal{A}_F) \xi\|_F^2
		+ 2\langle\eta_K^{-1}\mathcal{A}_F^2 \xi, \xi\rangle_F,
		\\
		\|(1-\eta_K^{-1}\mathcal{A}_F) \xi\|_F^2
		&
		= \|\xi\|_F^2
		+ \|(\eta_K^{-1}\mathcal{A}_F) \xi\|_F^2
		- 2\langle\eta_K^{-1}\mathcal{A}_F^2 \xi,\xi\rangle_F,
	\end{align}
	and $\langle\eta_K^{-1}\mathcal{A}_F^2 \xi,\xi\rangle_F\geq0$, the result holds true.
	In addition, if $g\neq0$, then $\xi\neq0$ and the inequality is strict.
\end{proof}

\begin{corollary}
	\label{corol}
	The operator $\Pi\TS$ is a strict contraction, i.e.
	\begin{align}
		\|\Pi\TS(g^{\ominus}_h)\|<\|g^{\ominus}_h\|, \quad \forall g^{\ominus}_h\in G_h\backslash\{0\}.
	\end{align}
\end{corollary}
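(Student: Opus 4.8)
The plan is to obtain the result directly by composing the two preceding theorems, with the strictness coming entirely from the scattering operator $\TS$. First I would fix an arbitrary $g_h^{\ominus}\in G_h\backslash\{0\}$ and introduce the intermediate field $w_h := \TS(g_h^{\ominus})\in G_h$, so that $\Pi\TS(g_h^{\ominus}) = \Pi(w_h)$.

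The first key step is to invoke Theorem \ref{thm:opS:cont}, which, precisely because $g_h^{\ominus}\neq 0$, gives the strict bound
\[
\|w_h\| = \|\TS(g_h^{\ominus})\| < \|g_h^{\ominus}\|.
\]
The second key step is to control $\|\Pi(w_h)\|$ by $\|w_h\|$. Here I would observe that, whether or not $\Gamma_{\TR}=\emptyset$, Theorem \ref{thm:opP:cont} guarantees that $\Pi$ is a (non-strict) contraction: in the case $\Gamma_{\TR}=\emptyset$ it is an isometry, so $\|\Pi(w_h)\| = \|w_h\|$, and in the case $\Gamma_{\TR}\neq\emptyset$ it satisfies $\|\Pi(w_h)\| \le \|w_h\|$; in either case $\|\Pi(w_h)\| \le \|w_h\|$. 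Chaining the two estimates then yields
\[
\|\Pi\TS(g_h^{\ominus})\| = \|\Pi(w_h)\| \le \|w_h\| < \|g_h^{\ominus}\|,
\]
and, since $g_h^{\ominus}\in G_h\backslash\{0\}$ was arbitrary, this is exactly the claimed strict contraction.

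There is essentially no analytical obstacle here; the real work has already been carried out in Lemma \ref{well-pos-loc2} and Theorems \ref{thm:opS:cont}--\ref{thm:opP:cont}. The only point that deserves care is to recognise that the strictness must \emph{not} be attributed to $\Pi$, since $\Pi$ can be a genuine isometry when no Robin boundary is present; the argument must therefore place the strict inequality on the $\TS$ factor and use only the non-strict contraction property of $\Pi$. It is also worth noting that the same norm $\|\cdot\|$ on $G_h$, induced face-by-face by the operators $\mathcal{A}_F$, is used in both theorems, so the two bounds compose without any change of norm.
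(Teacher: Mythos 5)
Your proposal is correct and follows exactly the paper's argument: the paper also obtains the corollary by composing Theorem \ref{thm:opS:cont} (strict contraction of $\TS$) with Theorem \ref{thm:opP:cont} (non-strict contraction of $\Pi$), with the strictness carried entirely by the $\TS$ factor. Your write-up merely makes explicit the one-line chaining that the paper leaves implicit.
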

\begin{proof}
	Corollary \ref{corol} is a direct consequence of Theorems \ref{thm:opS:cont} and \ref{thm:opP:cont}.
\end{proof}

\begin{remark}[Interpretation]
	The strict contraction of $\TS$ is a numerical effect: we have used the fact that the Helmholtz equation has no homogeneous polynomial solution in the proof of Lemma~\ref{well-pos-loc2}.
	The version of equation \eqref{eqn:lem2} for the continuous case corresponds to a conservation of energy.
	By contrast, the properties of $\Pi$ are related to the boundary conditions, and would be preserved at the continuous level.
	This operator is contracting only with a boundary condition that does not preserve energy, i.e.~the Robin boundary condition.
	More precisely, it is strictly contracting only for fields that are different from zero on $\Gamma_{\TR}$.
\end{remark}

\section{Numerical comparison of the methods}
\label{numerical_results}

In this section, iterative solution procedures for the HDG and CHDG methods are studied and compared by using two simple benchmarks and a more realistic application.
For the first two benchmarks, we consider configurations with homogeneous and heterogeneous media in Sections \ref{sec:num:homogeneous} and \ref{sec:num:heterogeneous}, respectively.
For the last benchmark (Section \ref{sec:num:marmousi}), the complex heterogeneous medium illustrates a realistic subsurface medium for seismic wave propagation.


\subsection{Benchmark and numerical setting}
\label{sec:num:benchmarks}

The numerical simulations are performed with a dedicated \textsf{MATLAB} code already used in Ref.~\cite{modave2023}.
The mesh generation and the visualization are performed with \textsf{gmsh} \cite{geuzaine2009}.
In all the cases, third-order polynomial Lobatto basis functions ($\mathrm{p}=3$) are used, and $h$ is the element size specified in \textsf{gmsh}.
For the symmetric fluxes, we take $\mu_F=\sqrt{\eta_K\eta_{K'}}$ and $\kappa_F=\sqrt{\kappa_K\kappa_{K'}}$ for an interior face $F$ shared by two elements $K$ and $K'$.

\paragraph{Benchmark 1 (Plane wave).}
The first benchmark represents the reflection of an incident plane wave at the interface between two media.
The problem is defined on a square domain $\Omega=(0,1)\times(0,1)$ partitioned into two rectangular regions $\Omega_1=(0,1/2)\times(0,1)$ and $\Omega_2=(1/2,1)\times(0,1)$ corresponding to the two media.
The wavenumber and impedance, which are constant in each region, are denoted by $\kappa_1$ and $\eta_1$ in $\Omega_1$, and by $\kappa_2$ and $\eta_2$ in $\Omega_2$.
The exact solution in region $\Omega_1$ is the sum of the incident and reflected waves, while in region $\Omega_2$ it is the transmitted wave.
The solution satisfies the continuity of $p$ and $\bn\cdot\bu$ across the interface ($x=1/2$) between the two regions $\Omega_1$ and $\Omega_2$.
It can be written as
\begin{align}
	p_\mathrm{ref}(x,y)=\left\{
	\begin{aligned}
		& e^{\imath\kappa_1(x\cos\theta_I +y\sin\theta_I)}+Re^{\imath\kappa_1(-x\cos\theta_I+y\sin\theta_I)},
		&
		& \text{in $\Omega_1$},
		\\
		& T e^{\imath\kappa_2(x\cos\theta_T+y\sin\theta_T)},
		&
		& \text{in $\Omega_2$},
	\end{aligned}
	\right.
\end{align}
with the reflection coefficient $R$ and transmission coefficient $T$ given by
\begin{align}
	R
	= \frac{\eta_2\cos\theta_I-\eta_1\cos\theta_T}{\eta_1\cos\theta_T+\eta_2\cos\theta_I}
	e^{\imath\kappa_1\cos\theta_I}
	\qquad\text{and}\qquad
	T
	= \frac{2\eta_2\cos\theta_I}{\eta_1\cos\theta_T+\eta_2\cos\theta_I}
	e^{\imath(\kappa_1\cos\theta_I-\kappa_2\cos\theta_T)/2}.
\end{align}
The angle $\theta_I$ of the incident wave is set to $\theta_I=\pi/4$ in the following.
The angle of the transmitted wave is $\theta_T=\arcsin(\frac{\kappa_1}{\kappa_2}\sin\theta_I)$.
The reference solution is shown for cases with homogeneous and heterogeneous media in Figures \ref{fig:snapshot:refsol:A} and \ref{fig:snapshot:refsol:B}, respectively.
In the numerical model, these solutions are enforced by using non-homogeneous Robin boundary conditions on the boundary of $\Omega$ with boundary data specified by the reference solution, i.e.~$s_\TR=p_\mathrm{ref}-\eta\bn\cdot\bu_\mathrm{ref}$ on $\partial\Omega=\Gamma_\TR$.

\begin{figure}[tb!]
	\begin{subfigure}[t]{.24\linewidth}
		\centering
		\caption{Plane wave with \\ homogeneous medium}
		\label{fig:snapshot:refsol:A}
		\smallskip
		\includegraphics[width=0.85\linewidth]{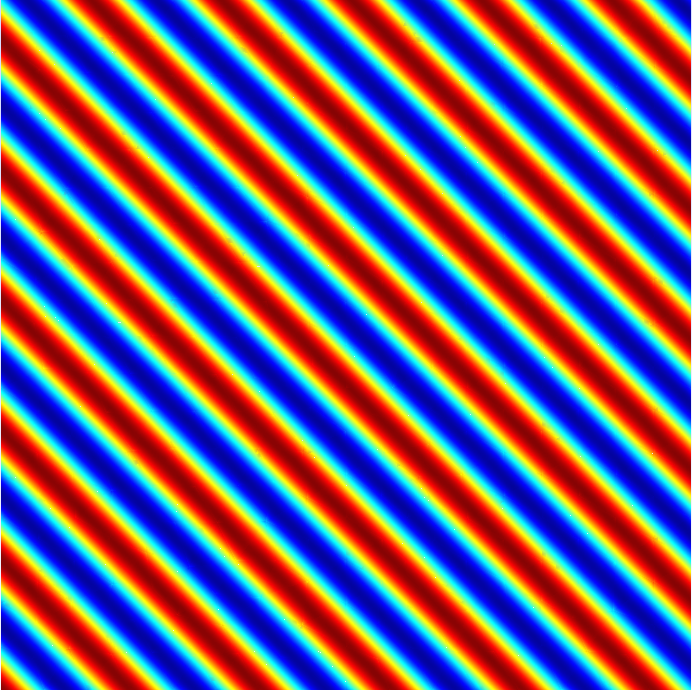}
		\includegraphics[width=0.85\linewidth]{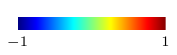}
	\end{subfigure}
	\begin{subfigure}[t]{.24\linewidth}
		\centering
		\caption{Plane wave with \\ heterogeneous medium}
		\label{fig:snapshot:refsol:B}
		\smallskip
		\includegraphics[width=0.85\linewidth]{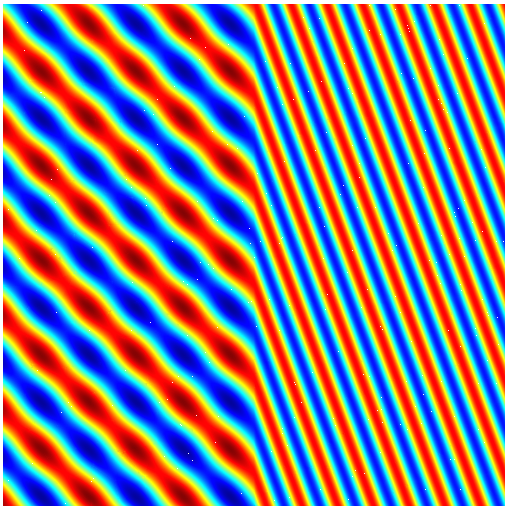}
		\includegraphics[width=0.85\linewidth]{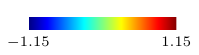}
	\end{subfigure}
	\begin{subfigure}[t]{.24\linewidth}
		\centering
		\caption{Cavity with \\ homogeneous medium}
		\label{fig:snapshot:refsol:C}
		\smallskip
		\includegraphics[width=0.85\linewidth]{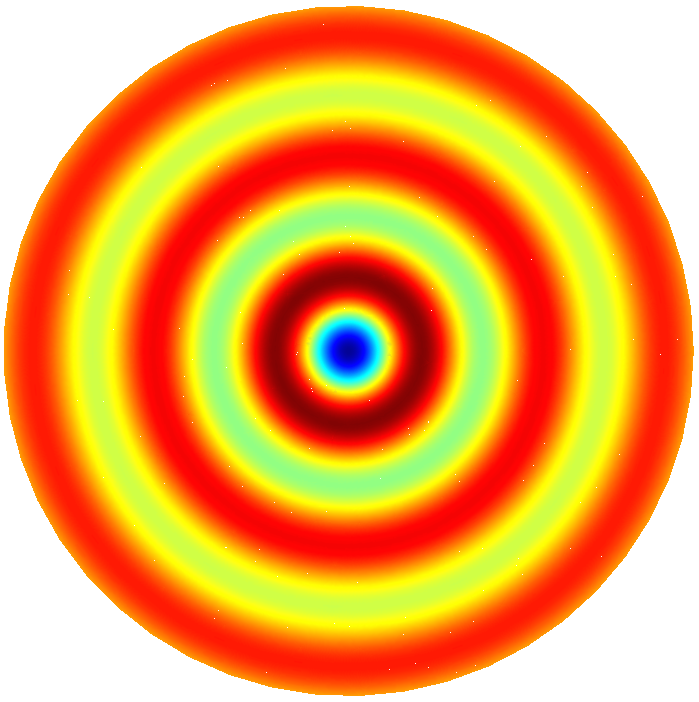}
		\includegraphics[width=0.85\linewidth]{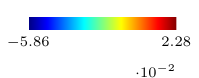}
	\end{subfigure} 
	\begin{subfigure}[t]{.24\linewidth}
		\centering
		\caption{Cavity with \\ heterogeneous medium}
		\label{fig:snapshot:refsol:D}
		\smallskip
		\includegraphics[width=0.85\linewidth]{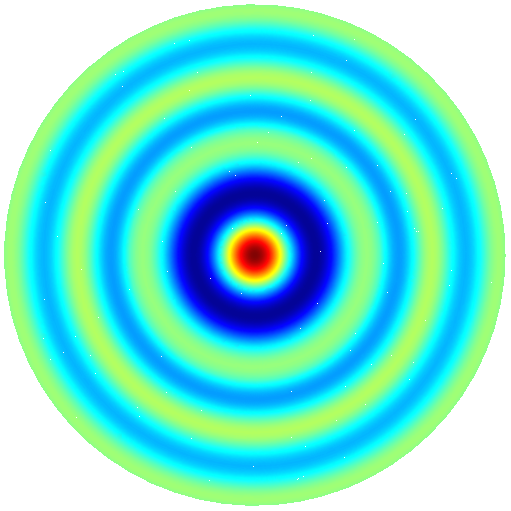}
		\includegraphics[width=0.85\linewidth]{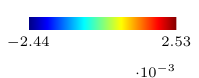}
	\end{subfigure}
	\caption{Real part of the reference solutions $p_{\mathrm{ref}}$ for both benchmark problems with the default parameters corresponding to the first cases of each benchmark in Tables \ref{tab:homog} and \ref{tab:inhomog}.}
	\label{fig:snapshot:refsol}
\end{figure}

\paragraph{Benchmark 2 (Cavity).}
We consider a circular domain $\Omega$ with radius $1/2$ partitioned into the circular region $\Omega_1$ with radius $1/4$ and the annulus $\Omega_2$ with the radial coordinate $r\in(1/4,1/2)$, all centered at the origin.
The physical coefficients are constant in each region, with the same notation as in the first benchmark.
A homogeneous Dirichlet boundary condition is prescribed on the boundary of the domain, i.e.~$\partial\Omega=\Gamma_\TD$ and $s_\TD=0$.
The constant volume source term $f=-1/(\imath\kappa\eta)$ is introduced on the right-hand side of the first equation in \eqref{eqn:pbm}.

If the physical coefficients do not correspond to a resonance mode, the solution of this benchmark is unique and real.
In this case, the reference solution, which depends only on the radial coordinate $r$, is given by
\begin{align}
	p_{\mathrm{ref},j}(r)=A_j \mathrm{J}_0(\kappa_jr)+B_j \mathrm{Y}_0(\kappa_j r)-\kappa_j^{-2}, \quad \text{in $\Omega_j$}, \quad j=1,2,
\end{align}
where $A_j$ and $B_j$ are constant coefficients, $\mathrm{J}_0$ is the Bessel function of the first kind of order 0 and $\mathrm{Y}_0$ is the Bessel function of the second kind of order 0.
To preclude singular solutions at the origin, one has to set $B_1=0$.
The other three coefficients can be determined by enforcing the continuity of $p$ and $\bn\cdot\bu$ across the interface and the homogeneous Dirichlet boundary condition.
This leads to the following linear system:
\begin{align}
	\begin{bmatrix}
		0                                & \mathrm{J}_0(\kappa_2R_2)         & \mathrm{Y}_0(\kappa_2R_2)         \\
		\mathrm{J}_0(\kappa_1R_1)        & -\mathrm{J}_0(\kappa_2R_1)        & -\mathrm{Y}_0(\kappa_2R_1)        \\
		\mathrm{J}_1(\kappa_1R_1)/\eta_1 & -\mathrm{J}_1(\kappa_2R_1)/\eta_2 & -\mathrm{Y}_1(\kappa_2R_1)/\eta_2
	\end{bmatrix}
	\begin{bmatrix}
		A_1 \\
		A_2 \\
		B_2
	\end{bmatrix}=
	\begin{bmatrix}
		\kappa_2^{-2}               \\
		\kappa_1^{-2}-\kappa_2^{-2} \\
		0
	\end{bmatrix},
\end{align}
with $R_1=1/4$ and $R_2=1/2$. Resonance phenomena occur whenever the values of $\kappa_1 R_1$ and $\kappa_2 R_2$ are such that the determinant of the matrix associated to the linear system is zero, namely the matrix is not invertible.
The reference solutions are shown for cases with homogeneous and heterogeneous media in Figures \ref{fig:snapshot:refsol:C} and \ref{fig:snapshot:refsol:D}, respectively.

\paragraph{Iterative solvers.}
Three standard iterations are considered: the fixed-point iteration (only for the CHDG system), the CGNR \emph{(conjugate gradient normal residual)} iteration, and the GMRES \emph{(generalized minimal residual)} iteration without restart.
For a given system $\matalg{A}\vecalg{g}=\vecalg{b}$, the second one corresponds to the conjugate gradient iteration applied to the normal system $\vecalg{A}^*\vecalg{Ag}=\vecalg{A}^*\vecalg{b}$, see e.g.~\cite{saad2003, quarteroni2013}.

Similarly to the approach used in \cite{modave2023}, we have used a symmetric preconditioning with the mass matrix $\matalg{M}$ associated to the faces of the elements.
Denoting the Cholesky factorization of the mass matrix with $\matalg{M} = \matalg{L}\matalg{L}^\top$, this leads to the system $\tilde{\matalg{A}}\tilde{\vecalg{g}}=\tilde{\vecalg{b}}$ with $\tilde{\matalg{A}}:=\matalg{L}^{-1}\matalg{A}\matalg{L}^{-\top}$, $\tilde{\vecalg{g}}:=\matalg{L}^{\top}\vecalg{g}$ and $\tilde{\vecalg{b}}:=\matalg{L}^{-1}\vecalg{b}$.
The preconditioned system corresponds to the one which would be used if orthonormal basis functions would be used on the face, see \cite{modave2023}.
Then, the $2$-norm of an algebraic vector $\tilde{\vecalg{g}}$ is equal to the $L^2$-norm of the corresponding field, i.e.~$\|\tilde{\vecalg{g}}\|_2 = \sqrt{\tilde{\vecalg{g}}^*\tilde{\vecalg{g}}} = \sqrt{\vecalg{g}^*\matalg{M}\vecalg{g}} = \|g_h\|$.

\paragraph{Numerical error.}
In this study, we consider a relative numerical error based on the energy norm of the physical fields.
It is defined as
\begin{equation}
	\text{relative error} := \frac{\|p_h-p_\mathrm{ref},\bu_h-\bu_\mathrm{ref}\|_E}{\|p_\mathrm{ref},\bu_\mathrm{ref}\|_E}
	\;,\quad\text{with }
	\|p,\bu\|^2_E := \sum_{K\in\mathcal{T}_h} \frac{\|p_K\|^2_{L^2(K)}}{2\rho_K c_K^2} + \tfrac{1}{2}\rho_K \|\bu_K\|^2_{L^2(K)}
	\;,
\end{equation}
where $(p_h,\bu_h)$ is the numerical solution, $(p_\mathrm{ref},\bu_\mathrm{ref})$ is the reference solution, and $\|.\|_E$ denotes the energy norm.


\subsection{Comparison for homogeneous media}
\label{sec:num:homogeneous}

The benchmark problems are solved for constant physical parameters corresponding to two levels of difficulty: low- and high-frequency cases for the plane-wave benchmark, and wavenumbers close and very close to the resonance mode $\kappa_3\approx17.3075$ for the cavity benchmark.
For the circular cavity with homogeneous medium, a resonance corresponds to $\kappa_j=2x_j$, where $\{x_j\}_{j\in\mathbb{N}}$ are the zeros of $\mathrm{J}_0(x)$.
In all the cases, the mesh sizes are chosen to achieve a relative error close to $10^{-2}$ when a direct solver is employed.
The parameters are given in Table~\ref{tab:homog}.

\begin{table}[tbh!]
	\centering
	\begin{subtable}{\textwidth}
		\centering
		\caption{Benchmark 1 (plane wave)}
		\smallskip
		\small
		\begin{tabular}{|c|c|c|c|c|c|c|c|c|c|}
			\hline
			Case
			& $\kappa$
			& $h$
			& Numerical flux
			& $\rho(\matalg{\Pi S})$
			& Relative error
			\\
			\hline
			\multirow{2}*{1}
			& \multirow{2}*{$15\pi$}
			& \multirow{2}*{$1/16$}
			& Sym-0
			& $1 - 4.18\cdot10^{-3}$ 
			& $1.44\cdot10^{-2}$
			\\
			\cline{4-6}
			&
			&
			& Sym-2
			& $1 - 7.97\cdot10^{-3}$ 
			& $1.70\cdot10^{-2}$
			\\
			\hline
			\multirow{2}*{2}
			& \multirow{2}*{$30\pi$}
			& \multirow{2}*{$1/34$}
			& Sym-0
			& $1 - 1.88\cdot10^{-3}$ 
			& $1.37\cdot10^{-2}$
			\\
			\cline{4-6}
			&
			&
			& Sym-2
			& $1 - 3.48\cdot10^{-3}$ 
			& $1.59\cdot10^{-2}$
			\\
			\hline
		\end{tabular}
		\label{tab_open_hom}
	\end{subtable}
	\medskip
	\begin{subtable}{\textwidth}
		\centering
		\caption{Benchmark 2 (cavity)}
		\smallskip
		\small
		\begin{tabular}{|c|c|c|c|c|c|c|c|c|c|}
			\hline
			Case
			& $\kappa$
			& $h$
			& Numerical flux
			& $\rho(\matalg{\Pi S})$
			& Relative error
			\\
			\hline
			\multirow{2}*{1}
			& \multirow{2}*{$16.5$}
			& \multirow{2}*{$0.04$}
			& Sym-0
			& $1 - 1.22\cdot10^{-8}$ 
			& $1.07\cdot10^{-2}$
			\\
			\cline{4-6}
			&
			&
			& Sym-2
			& $1 - 1.71\cdot10^{-6}$ 
			& $1.07\cdot10^{-2}$
			\\
			\hline
			\multirow{2}*{2}
			& \multirow{2}*{$17$}
			& \multirow{2}*{$0.025$}
			& Sym-0
			& $1 - 1.15\cdot10^{-9}$ 
			& $1.16\cdot10^{-2}$
			\\
			\cline{4-6}
			&
			&
			& Sym-2
			& $1 - 1.44\cdot10^{-7}$ 
			& $1.16\cdot10^{-2}$
			\\
			\hline
		\end{tabular}
		\label{tab_cav_hom}
	\end{subtable}
	\caption{Parameters for the benchmark problems with homogeneous media, including the spectral radius $\rho(\matalg{\Pi S})$ of the matrix of the CHDG hybridized system, and the relative numerical error. In these cases, $\omega=\kappa$, $c=1$, $\rho=1$ and $\eta=1$.}
	\label{tab:homog}
\end{table}

\begin{figure}[!p]
	\centering
	\includegraphics[width=1\linewidth]{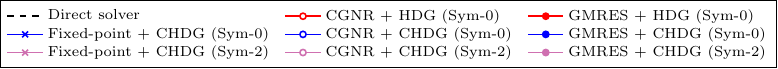}
	\\
	\medskip
	\begin{subfigure}[b]{0.49\textwidth}
		\centering
		\caption{Benchmark 1 -- Case 1 ($\kappa=15\pi$)}
		\includegraphics[width=1\linewidth]{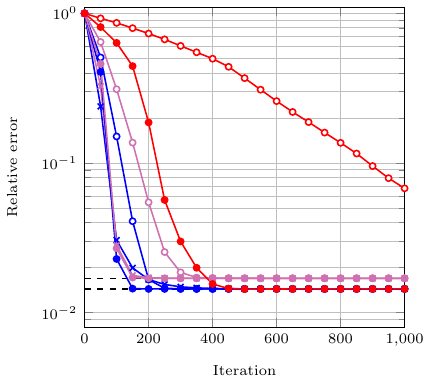}
	\end{subfigure}
	\hfill
	\begin{subfigure}[b]{0.49\textwidth}
		\centering
		\caption{Benchmark 1 -- Case 2 ($\kappa=30\pi$)}
		\includegraphics[width=1\linewidth]{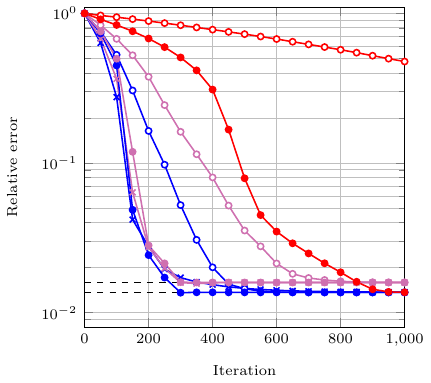}
	\end{subfigure}
	\\
	\medskip
	\begin{subfigure}[b]{0.49\textwidth}
		\centering
		\caption{Benchmark 2 -- Case 1 ($\kappa=16.5$)}
		\includegraphics[width=1\linewidth]{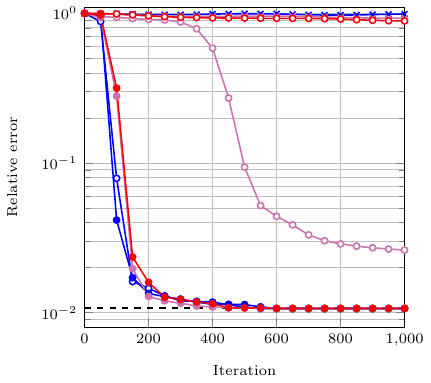}
	\end{subfigure}
	\hfill
	\begin{subfigure}[b]{0.49\textwidth}
		\centering
		\caption{Benchmark 2 -- Case 2 ($\kappa=17$)}
		\includegraphics[width=1\linewidth]{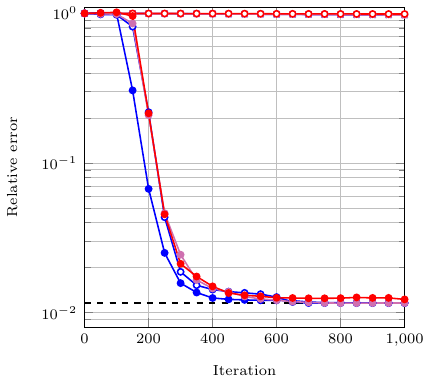}
	\end{subfigure}
	\caption{Results for the benchmark problems with homogeneous media.
		Error history with different iterative schemes and different DG methods.
		The dashed horizontal lines correspond to the relative numerical errors obtained with a direct solver for the different numerical fluxes.}
	\label{fig:histo:homog}
\end{figure}

The history of relative error is plotted in Figure \ref{fig:histo:homog} during the fixed-point iterations (lines with marker $\times$, only for CHDG), the CGNR iterations (lines with marker $\circ$) and the GMRES iterations (lines with marker $\bullet$) applied to the HDG and CHDG hybridized systems.
The 0th-order symmetric flux (Sym-0) is considered for both methods, and the 2nd-order symmetric flux (Sym-2) is used for CHDG.
The relative errors obtained with a direct solver is indicated by the horizontal dashed lines for both numerical fluxes.
Let us note that, since the impedance $\eta$ is constant, the upwind flux is identical to the 0th-order symmetric flux with $\mathcal{A}_F=\eta$.

The following observations can be made:
\begin{itemize}
	\item
	The fixed-point iteration applied to the CHDG system converges in all the cases.
	The convergence is fast for the plane-wave benchmark, and very slow for the cavity benchmark.
	In both cases, the convergence is slower for the parameters corresponding to the more difficult configuration (i.e.~high frequency case or close to a resonance frequency).
	These results are in accordance with the spectral radii given in Table~\ref{tab:homog}: the convergence is slower for radii closer to~$1$.
	\item
	Comparing the numerical fluxes used in the CHDG method, we observe that the convergence of the fixed point iteration is comparable with `Sym-0' and `Sym-2'.
	The results are similar for the GMRES iteration.
	In contrast, the CGNR iteration with `Sym-2' is slower than with `Sym-0', sometimes significantly so.
	\item
	The convergence of the CGNR and GMRES iterations is slower with HDG than with CHDG for the plane-wave benchmark.
	The difference is more important for the high-frequency case.
	For the cavity benchmark, the convergence of GMRES is similar with both methods, while the convergence of CGNR is slower with HDG than with CHDG.
\end{itemize}

In a nutshell, CHDG with `Sym-0' is one of the best solution in all the cases.
The fixed-point iteration applied to the CHDG system converges, but its performance is strongly influenced by the physical parameters, and it becomes unusable for cavities.
The second-order numerical flux does not accelerate the convergence of the iterative procedures.


\subsection{Comparison for heterogeneous media}
\label{sec:num:heterogeneous}

The benchmark problems are now tested with two sets of parameters.
For both benchmarks, the impedance $\eta$ is constant in the first set, it is not constant in the second set, and the wavenumber $\kappa$ is not constant in all the sets.
The parameters are given in Table~\ref{tab:inhomog}.
For all the cases, the mesh size is chosen to obtain a relative error close to $10^{-2}$ when a direct solver is employed.

\begin{table}[tbh!]
	\centering
	\begin{subtable}{\textwidth}
		\centering
		\caption{Benchmark 1 (plane wave)}
		\smallskip
		\small
		\begin{tabular}{|c|c|c|c|c|c|c|c|c|c|}
			\hline
			Case
			& $\omega$
			& $c$
			& $\rho$
			& $\kappa$
			& $\eta$
			& $h$
			& Numerical flux
			& $\rho(\matalg{\Pi S})$
			& Relative error
			\\
			\hline
			\multirow{3}*{1}
			& \multirow{3}*{$15\pi$}
			& \multirow{3}*{\makecell{$1$     \\ $1/2$}}
			& \multirow{3}*{\makecell{$1$     \\ $2$}}
			& \multirow{3}*{\makecell{$15\pi$ \\ $30\pi$}}
			& \multirow{3}*{\makecell{$1$     \\ $1$}}
			& \multirow{3}*{\makecell{$1/16$  \\ $1/34$}}
			& Upw
			& $1 - 1.63\cdot10^{-3}$          
			& $1.01\cdot10^{-2}$
			\\
			\cline{8-10}
			&
			&
			&
			&
			&
			&
			& Sym-0
			& $1 - 1.63\cdot10^{-3}$          
			& $1.01\cdot10^{-2}$
			\\
			\cline{8-10}
			&
			&
			&
			&
			&
			&
			& Sym-2
			& $1 - 4.32\cdot10^{-3}$          
			& $1.20\cdot10^{-2}$
			\\
			\hline
			\multirow{3}*{2}
			& \multirow{3}*{$15\pi$}
			& \multirow{3}*{\makecell{$1$     \\ $1/2$}}
			& \multirow{3}*{\makecell{$1$     \\ $1$}}
			& \multirow{3}*{\makecell{$15\pi$ \\ $30\pi$}}
			& \multirow{3}*{\makecell{$1$     \\ $1/2$}}
			& \multirow{3}*{\makecell{$1/16$  \\ $1/34$}}
			& Upw
			& $1 + 2.38\cdot10^{-2}$          
			& $9.75\cdot10^{-3}$
			\\
			\cline{8-10}
			&
			&
			&
			&
			&
			&
			& Sym-0
			& $1 - 1.55\cdot10^{-2}$          
			& $9.74\cdot10^{-3}$
			\\
			\cline{8-10}
			&
			&
			&
			&
			&
			&
			& Sym-2
			& $1 - 4.31\cdot10^{-3}$          
			& $1.16\cdot10^{-2}$
			\\
			\hline
		\end{tabular}
		\label{tab_open_inhom}
	\end{subtable}
	\medskip
	\begin{subtable}{\textwidth}
		\centering
		\caption{Benchmark 2 (cavity)}
		\smallskip
		\small
		\begin{tabular}{|c|c|c|c|c|c|c|c|c|c|}
			\hline
			Case
			& $\omega$
			& $c$
			& $\rho$
			& $\kappa$
			& $\eta$
			& $h$
			& Numerical flux
			& $\rho(\matalg{\Pi S})$
			& Relative error
			\\
			\hline
			\multirow{3}*{1}
			& \multirow{3}*{$10\pi$}
			& \multirow{3}*{\makecell{$1$     \\ $2/3$}}
			& \multirow{3}*{\makecell{$1$     \\ $3/2$}}
			& \multirow{3}*{\makecell{$10\pi$ \\ $15\pi$}}
			& \multirow{3}*{\makecell{$1$     \\ $1$}}
			& \multirow{3}*{\makecell{$1/12$  \\ $1/16$}}
			& Upw
			& $1 - 5.19\cdot10^{-5}$          
			& $1.57\cdot10^{-2}$
			\\
			\cline{8-10}
			&
			&
			&
			&
			&
			&
			& Sym-0
			& $1 - 5.19\cdot10^{-5}$          
			& $1.57\cdot10^{-2}$
			\\
			\cline{8-10}
			&
			&
			&
			&
			&
			&
			& Sym-2
			& $1 - 5.28\cdot10^{-5}$          
			& $1.67\cdot10^{-2}$
			\\
			\hline
			\multirow{3}*{2}
			& \multirow{3}*{$10\pi$}
			& \multirow{3}*{\makecell{$1$     \\ $2/3$}}
			& \multirow{3}*{\makecell{$1$     \\ $1$}}
			& \multirow{3}*{\makecell{$10\pi$ \\ $15\pi$}}
			& \multirow{3}*{\makecell{$1$     \\ $2/3$}}
			& \multirow{3}*{\makecell{$1/12$  \\ $1/16$}}
			& Upw
			& $1 + 3.45\cdot10^{-4}$          
			& $8.59\cdot10^{-3}$
			\\
			\cline{8-10}
			&
			&
			&
			&
			&
			&
			& Sym-0
			& $1 - 4.99\cdot10^{-5}$          
			& $8.61\cdot10^{-3}$
			\\
			\cline{8-10}
			&
			&
			&
			&
			&
			&
			& Sym-2
			& $1 - 5.29\cdot10^{-5}$          
			& $1.01\cdot10^{-2}$
			\\
			\hline
		\end{tabular}
		\label{tab_cav_inhom}
	\end{subtable}
	\caption{Parameters of the benchmark problems with heterogeneous media, including spectral radius $\rho(\matalg{\Pi S})$ of the matrix of the CHDG hybridized system, and the relative numerical error.}
	\label{tab:inhomog}
\end{table}

\begin{figure}[!p]
	\centering
	\includegraphics[width=1\linewidth]{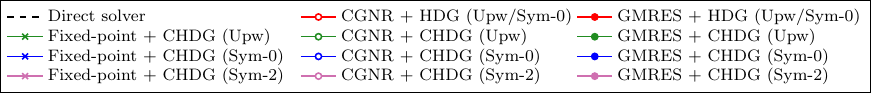}
	\\
	\medskip
	\begin{subfigure}[b]{0.49\textwidth}
		\centering
		\caption{Benchmark 1 -- Case 1 \\ \small ($\kappa_1=15\pi$, $\kappa_2=30\pi$, $\eta_1=1$, $\eta_2=1$)}
		\includegraphics[width=1\linewidth]{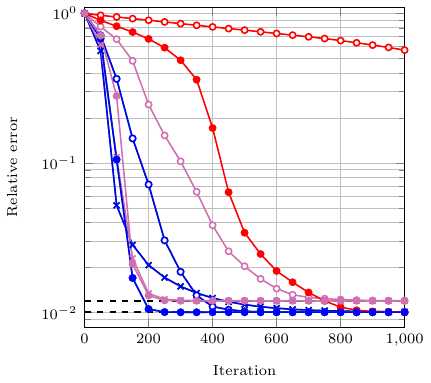}
	\end{subfigure}
	\hfill
	\begin{subfigure}[b]{0.49\textwidth}
		\centering
		\caption{Benchmark 1 -- Case 2 \\ \small ($\kappa_1=15\pi$, $\kappa_2=30\pi$, $\eta_1=1$, $\eta_2=1/2$)}
		\label{fig:histo:heterog:b}
		\includegraphics[width=1\linewidth]{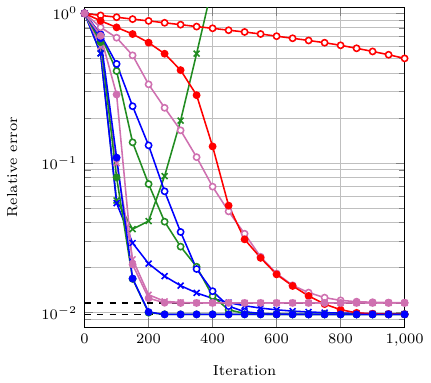}
	\end{subfigure}
	\\
	\medskip
	\begin{subfigure}[b]{0.49\textwidth}
		\caption{Benchmark 2 -- Case 1 \\ \small ($\kappa_1=10\pi$, $\kappa_2=15\pi$, $\eta_1=1$, $\eta_2=1$)}
		\centering
		\includegraphics[width=1\linewidth]{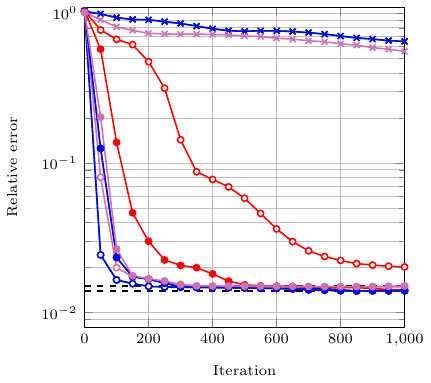}
	\end{subfigure}
	\hfill
	\begin{subfigure}[b]{0.49\textwidth}
		\centering
		\caption{Benchmark 2 -- Case 2 \\ \small ($\kappa_1=10\pi$, $\kappa_2=15\pi$, $\eta_1=1$, $\eta_2=2/3$)}
		\label{fig:histo:heterog:d}
		\includegraphics[width=1\linewidth]{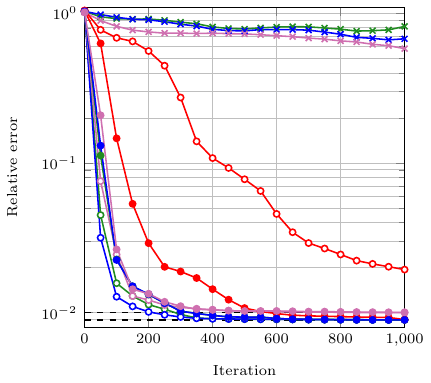}
	\end{subfigure}
	\caption{Results of the benchmark problems with heterogeneous media.
		Error history with different iterative schemes and different DG methods.
		The dashed horizontal lines correspond to the relative numerical errors obtained with a direct solver for the different numerical fluxes.
		In most graphs, the green lines (CHDG `Upw') are hidden by the corresponding blue lines (CHDG `Sym-0').}
	\label{fig:histo:inhomog}
\end{figure}

The history of relative error is plotted in Figure \ref{fig:histo:inhomog} for different combinations of hybridized methods, numerical fluxes and iterative schemes.
For CHDG, we have considered the upwind fluxes (Upw) and both zeroth- and second-order symmetric fluxes (Sym-0 and Sym-2).
The numerical fluxes `Upw' and `Sym-0' are identical when the impedance is constant.
Nonetheless, the results are very similar when the impedance is not constant.
For HDG, both fluxes lead to the same results, even when the impedance is discontinuous.

We observe that the fixed-point iteration applied to the CHDG systems with the symmetric fluxes converges in all cases, while it fails to converge with the upwind fluxes when the impedance is discontinuous.
The divergence can be seen in Figure \ref{fig:histo:heterog:b}, and it also appears when increasing the number of iterations for the case corresponding to Figure \ref{fig:histo:heterog:d}.
These results are consistent with the theory and the spectral radii in Table \ref{tab:inhomog}: the contraction property of the operator $\Pi\TS$ was proved in Corollary \ref{corol} only for symmetric fluxes, and the convergence occurs numerically for radii strictly less than 1.
The convergence is quite fast for the plane-wave benchmark, while it slows down dramatically for the cavity benchmark.

Similarly to the cases with homogeneous media, when comparing the symmetric numerical fluxes used in the CHDG system, we observe that the convergence of CGNR and GMRES with `Sym-2' is always either comparable or slower than with `Sym-0' for both benchmarks.
Once again, the convergence of the iterative schemes is much slower with HDG than with CHDG, regardless of the iterative procedure that is used.

In a nutshell, the CGNR and GMRES iterations combined with CHDG and either `Upw' or `Sym-0' are always the best options.
The fixed-point iteration applied to CHDG with `Sym-0' always converges, whereas it does not with HDG or CHDG with `Upw'.


\subsection{Realistic application}
\label{sec:num:marmousi}

To compare the methods on a more illustrative case, we consider the well-known Marmousi benchmark problem, which consists of a realistic underground structure commonly used to assess numerical methods, see e.g.~\cite{vion2018}.
The variations of the velocity $c(\bx)$ and the density $\rho(\bx)$ are shown in Figure \ref{fig:marmousi}.

An acoustic field is generated by a source point at the coordinates $(4585\mathrm{m},-10\mathrm{m})$, near the top of the rectangular domain $\Omega={]0\mathrm{m},9192\mathrm{m}[}\times{]-2094\mathrm{m},0\mathrm{m}[}$.
The mesh is generated with \textsf{gmsh} \cite{geuzaine2009}, and includes $60\,761$ elements.
The element size is adjusted following the empirical rule $h \approx \lambda(\bx)/n_{\lambda}$, with the wavelength $\lambda(\bx) = c(\bx)/f$, the frequency $f=30\:\mathrm{Hz}$, the angular frequency $\omega=2\pi f$, the mesh density $n_{\lambda} = 10/(\mathrm{p}+1)$, and the polynomial degree $\mathrm{p}=3$.
The physical coefficients are constant in each cell.
The Dirichlet boundary condition $p=0$ is applied at the top of the domain, and the absorbing boundary condition $p-\eta\bn\cdot\bu=0$ is prescribed on the other sides of the domain.
The mesh and the solution are shown in Figure \ref{fig:marmousi}.

\begin{figure}[tb!]
	\centering
	\begin{subfigure}[t]{0.49\linewidth}
		\centering
		\caption{Velocity $c(\bx)$}
		\label{marmousi_2pi30_vel}
		\includegraphics[width=\linewidth]{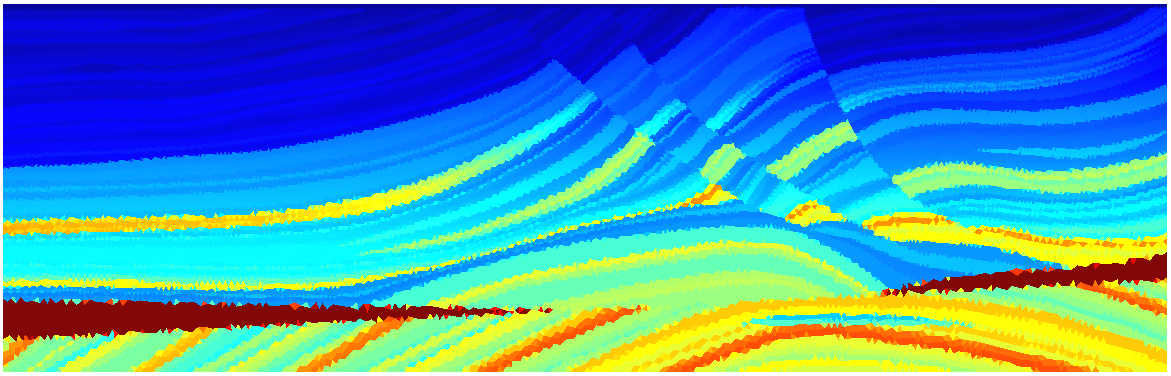}
		\includegraphics[width=0.85\linewidth]{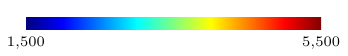}
	\end{subfigure}
	\hfill
	\begin{subfigure}[t]{0.49\linewidth}
		\centering
		\caption{Density $\rho(\bx)$}
		\label{marmousi_2pi30_den}
		\includegraphics[width=\linewidth]{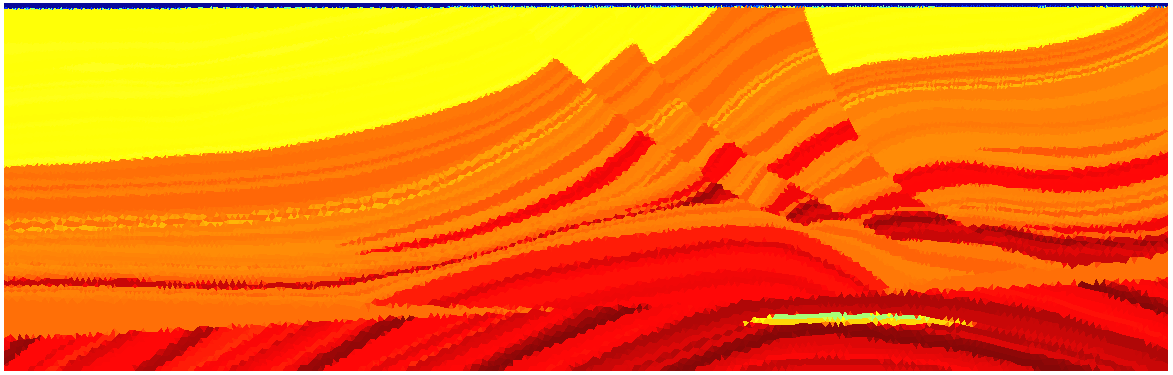}
		\includegraphics[width=0.85\linewidth]{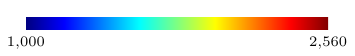}
	\end{subfigure}
	\\
	\begin{subfigure}[t]{0.49\linewidth}
		\centering
		\caption{Mesh}
		\label{marmousi_2pi30_mesh}
		\includegraphics[width=\linewidth]{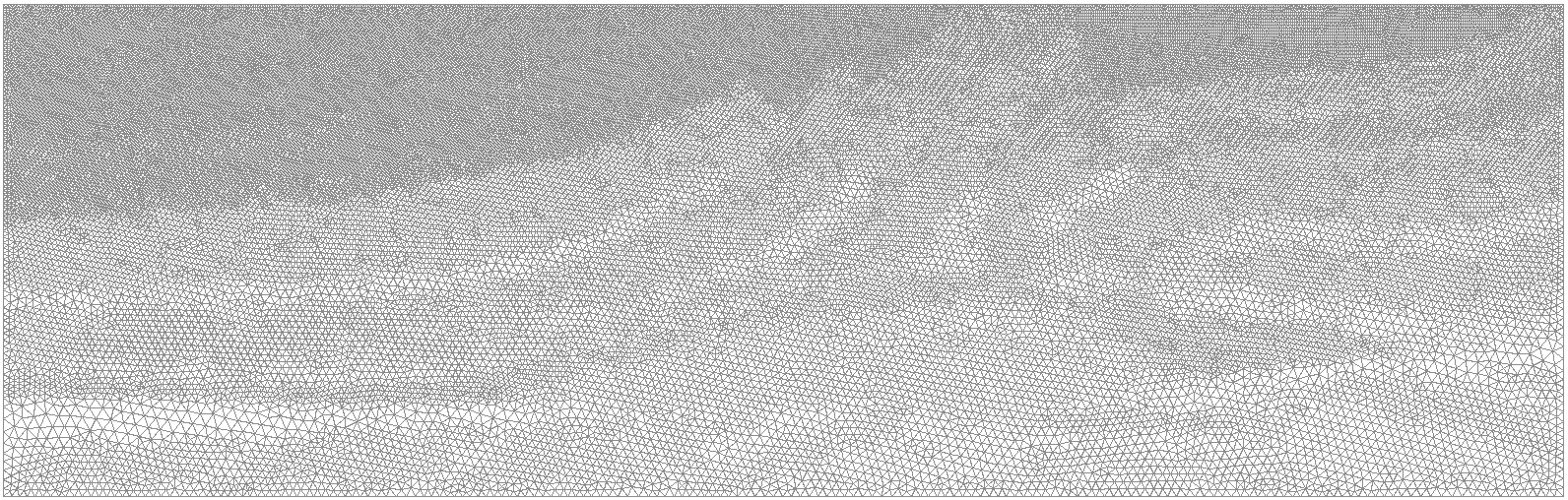}
	\end{subfigure}
	\hfill
	\begin{subfigure}[t]{0.49\linewidth}
		\centering
		\caption{Reference solution}
		\label{marmousi_2pi30_sol}
		\includegraphics[width=\linewidth]{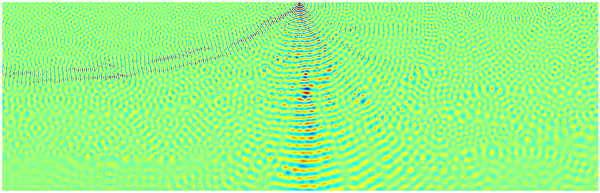}
		\includegraphics[width=0.85\linewidth]{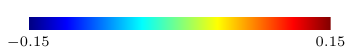}
	\end{subfigure}
	\caption{Marmousi benchmark problem. Profile of velocity $c(\bx)$ (a), of density $\rho(\bx)$(b), mesh (c) and real part of the reference pressure field obtained with a direct solver (d).}
	\label{fig:marmousi}
\end{figure}

\begin{figure}[tb!]
	\centering
	\includegraphics[width=1\linewidth]{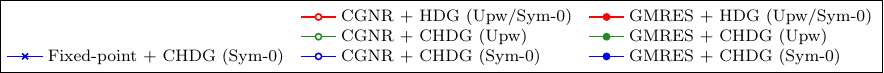}
	\\
	\medskip
	\begin{subfigure}{0.49\textwidth}
		\centering
		\caption{Relative residual}
		\includegraphics[width=1\linewidth]{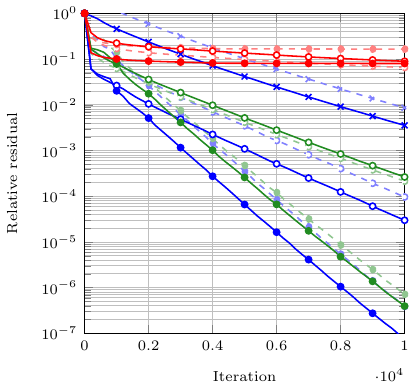}
	\end{subfigure}
	\hfill
	\begin{subfigure}{0.49\textwidth}
		\centering
		\caption{Relative error}
		\includegraphics[width=1\linewidth]{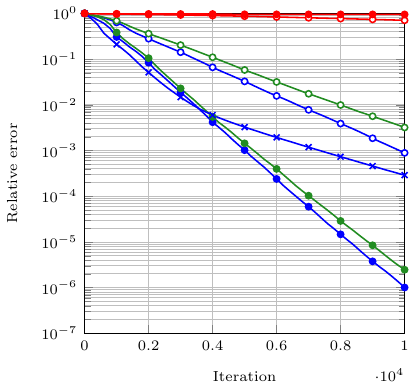}
	\end{subfigure}
	\caption{Marmousi benchmark problem. Relative residual (left) and relative error history (right) with different iterative schemes and DG methods. For the relative residual history, we consider for each case both the residual associated with the hybridized system (in $\times$, plain lines) and the one associated with the physical system (in physical norm, dashed lines). The relative error is computed by comparing the numerical physical solution obtained at each iteration with the reference numerical physical solution obtained with a direct solver.}
	\label{marmousi_history}
\end{figure}

The problem is solved using HDG and CHDG with the zeroth-order symmetric flux (Sym-0) and the upwind flux (Upw).
We consider the fixed-point iteration (only for CHDG with `Sym-0'), the CGNR iteration, and the GMRES iterations (with a restart after every 10 iterations).
The restart strategy, which is widely used in practice, helps to reduce computation and memory usage.

Figure \ref{marmousi_history} shows the histories of residual and error for each combination of methods.
For each case, we show the relative residual associated with the preconditioned hybridized system and the one corresponding to the physical system.
In contrast to the previous sections, the relative error is computed by comparing the numerical solution obtained at each iteration with that obtained with a direct solver.

We observe that the iterative solution of HDG is considerably slower compared to that of CHDG, regardless of whether CGNR or GMRES is used, both in terms of residuals and errors.
The choice of numerical flux has a negligible effect on the convergence of the iterations with HDG (only one curve is shown for clarity), while the iterations with `Upw' are slightly slower than with `Sym-0' when considering CHDG.

Comparing the different iterative strategies for solving the CHDG system with `Sym-0', we observe that the decay of both error and residual is faster with GMRES than with CGNR.
The slopes of the curves of error and residual are nonetheless similar.
The fixed-point iteration performs well until approximately $4\,000$ iterations, where the decay of error slows down.

One should note that these results do not give a final conclusion on which iterative scheme should be used in practice, since the runtime per iteration depends on the iterative scheme.
The fixed-point iteration requires a single multiplication by the matrix of the system, while CGNR involves the multiplications by both the matrix and its adjoint at each iteration.
The cost of GMRES increases at each iteration until the restart, and it is, overall, the most expensive method per iteration.
Because these results are obtained with a \textsf{MATLAB} code that is not fully optimized, the observed runtimes are not reported here as they do not provide a representative comparison of efficiency.
A performance analysis will be performed on a 3D parallel code under development.

\section{Conclusion}
\label{conclusion}

In this work, we have extended and studied the CHDG method, originally proposed for Helmholtz problems in \cite{modave2023}, in order to address wave propagation in heterogeneous media with piecewise constant physical coefficients.
In particular, we considered formulations with standard upwind numerical fluxes, which are widely used in the literature, and symmetric numerical fluxes involving a general impedance operator $\mathcal{A}$.
For both numerical fluxes, transmission variables are introduced at the interfaces between the elements in the hybridization procedure.
In contrast to \cite{modave2023}, the definition of the transmission variables involves the physical coefficients and the operator $\mathcal{A}$.

The CHDG hybridized system can be written in the form $(\TI-\Pi\TS)g=b$.
In the case of symmetric fluxes, if $\mathcal{A}$ is a positive and self-adjoint operator, it is proven that the operator $\Pi\TS$ is a strict contraction, which makes it possible to solve the system with a fixed-point iteration.
This result also holds for the standard upwind fluxes if the impedance $\eta$ is constant.
Otherwise, $\Pi\TS$ is not a contraction and the fixed-point iteration does not converge, as confirmed by the numerical results.

We have systematically studied and compared the different methods, numerical fluxes and iterative schemes by considering 2D benchmark problems.
Most of the observations that were made in the homogeneous case in \cite{modave2023} remain valid.
The fixed-point iteration applied to the CHDG system converges for the cases where it is proven that the operator $\Pi\TS$ is a strict contraction.
While the convergence of the fixed-point iteration is in general very fast for benchmarks with physical dissipation, it is very slow for cavity cases.
Considering the CGNR and GMRES iterations, we observe that their convergence is nearly always faster with CHDG than with the standard HDG method.
The difference is very significant for the benchmarks with impedance boundary conditions, and it is drastic for the Marmousi benchmark.

In this work, we have also investigated the use of symmetric fluxes with a second-order differential operator $\mathcal{A}$, inspired by recent work on domain decomposition methods.
While this operator satisfies the assumptions of the theoretical framework, it always results in a slower convergence of the iterative schemes compared to the symmetric fluxes with a scalar operator.
Comparing the upwind fluxes and the scalar symmetric fluxes, we have observed that, in the cases where they are not identical (i.e.~when the impedance $\eta$ is discontinuous at the interfaces), they provide similar results, except for the fixed-point iteration.

This approach can be extended to 3D cases and to other physical problems, such as aeroacoustic, electromagnetic \cite{rappaport2025} and elastic wave problems.
We are currently making effort in these directions.
We are investigating in more detail the computational aspects, as well as combinations with preconditioning and domain decomposition methods, to further accelerate the convergence of the iterations.

\paragraph{Acknowledgments.}
This work was supported in part by the ANR JCJC project \emph{WavesDG} (research grant ANR-21-CE46-0010).

\small
\setlength{\bibsep}{0pt plus 0ex}
\bibliographystyle{abbrvnat}
\bibliography{myrefs}

\begin{thebibliography}{46}
\providecommand{\natexlab}[1]{#1}
\providecommand{\url}[1]{\texttt{#1}}
\expandafter\ifx\csname urlstyle\endcsname\relax
  \providecommand{\doi}[1]{doi: #1}\else
  \providecommand{\doi}{doi: \begingroup \urlstyle{rm}\Url}\fi

\bibitem[Barucq et~al.(2021{\natexlab{a}})Barucq, Bendali, Diaz, and
  Tordeux]{barucq2021}
H.~Barucq, A.~Bendali, J.~Diaz, and S.~Tordeux.
\newblock Local strategies for improving the conditioning of the plane-wave
  {U}ltra-{W}eak {V}ariational {F}ormulation.
\newblock \emph{Journal of Computational Physics}, 441:\penalty0 110449,
  2021{\natexlab{a}}.

\bibitem[Barucq et~al.(2021{\natexlab{b}})Barucq, Diaz, Meyer, and
  Pham]{barucq2021_2}
H.~Barucq, J.~Diaz, R.-C. Meyer, and H.~Pham.
\newblock Implementation of hybridizable discontinuous {G}alerkin method for
  time-harmonic anisotropic poroelasticity in two dimensions.
\newblock \emph{International Journal for Numerical Methods in Engineering},
  122\penalty0 (12):\penalty0 3015--3043, 2021{\natexlab{b}}.

\bibitem[Barucq et~al.(2023)Barucq, Rouxelin, and Tordeux]{barucq2023}
H.~Barucq, N.~Rouxelin, and S.~Tordeux.
\newblock Construction and analysis of a {HDG} solution for the total-flux
  formulation of the convected {H}elmholtz equation.
\newblock \emph{Mathematics of Computation}, 92\penalty0 (343):\penalty0
  2097--2131, 2023.

\bibitem[Barucq et~al.(2024)Barucq, Bendali, Diaz, and Tordeux]{barucq2024}
H.~Barucq, A.~Bendali, J.~Diaz, and S.~Tordeux.
\newblock Revisiting the {P}lane-{W}ave {U}ltra-{W}eak {V}ariational
  {F}ormulation.
\newblock \emph{HAL preprint hal-04591459}, 2024.

\bibitem[Bayliss et~al.(1983)Bayliss, Goldstein, and Turkel]{bayliss1983}
A.~Bayliss, C.~I. Goldstein, and E.~Turkel.
\newblock An iterative method for the {H}elmholtz equation.
\newblock \emph{Journal of Computational Physics}, 49:\penalty0 443--457, 1983.

\bibitem[B{\'e}riot et~al.(2015)B{\'e}riot, Prinn, and Gabard]{beriot2015}
H.~B{\'e}riot, A.~Prinn, and G.~Gabard.
\newblock Efficient implementation of high-order finite elements for
  {H}elmholtz problems.
\newblock \emph{International Journal for Numerical Methods in Engineering},
  106\penalty0 (3):\penalty0 213--240, 2015.

\bibitem[Carstensen et~al.(2016)Carstensen, Demkowicz, and
  Gopalakrishnan]{carstensen2016}
C.~Carstensen, L.~Demkowicz, and J.~Gopalakrishnan.
\newblock Breaking spaces and forms for the {DPG} method and applications
  including {M}axwell equations.
\newblock \emph{Computers \& Mathematics with Applications}, 72\penalty0
  (3):\penalty0 494--522, 2016.

\bibitem[Cessenat and Despr{\'e}s(1998)]{cessenat1998}
O.~Cessenat and B.~Despr{\'e}s.
\newblock Application of an ultra weak variational formulation of elliptic
  {PDE}s to the two-dimensional {H}elmholtz problem.
\newblock \emph{SIAM Journal on Numerical Analysis}, 35\penalty0 (1):\penalty0
  255--299, 1998.

\bibitem[Chen et~al.(2013)Chen, Lu, and Xu]{chen2013}
H.~Chen, P.~Lu, and X.~Xu.
\newblock A hybridizable discontinuous {G}alerkin method for the {H}elmholtz
  equation with high wave number.
\newblock \emph{SIAM Journal on Numerical Analysis}, 51\penalty0 (4):\penalty0
  2166--2188, 2013.

\bibitem[Christodoulou et~al.(2017)Christodoulou, Laghrouche, Mohamed, and
  Trevelyan]{christodoulou2017}
K.~Christodoulou, O.~Laghrouche, M.~S. Mohamed, and J.~Trevelyan.
\newblock High-order finite elements for the solution of {H}elmholtz problems.
\newblock \emph{Computers \& Structures}, 191:\penalty0 129--139, 2017.

\bibitem[Chung and Ciarlet(2013)]{chung2013}
E.~T. Chung and P.~Ciarlet.
\newblock A staggered discontinuous {G}alerkin method for wave propagation in
  media with dielectrics and meta-materials.
\newblock \emph{Journal of Computational and Applied Mathematics},
  239:\penalty0 189--207, 2013.

\bibitem[Cockburn(2016)]{cockburn2016}
B.~Cockburn.
\newblock \emph{Static Condensation, Hybridization, and the Devising of the HDG
  Methods}, pages 129--177.
\newblock Springer International Publishing, 2016.

\bibitem[Cockburn et~al.(2008)Cockburn, Dong, and Guzmán]{cockburn2008}
B.~Cockburn, B.~Dong, and J.~Guzmán.
\newblock A superconvergent {LDG}-hybridizable {G}alerkin method for
  second-order elliptic problems.
\newblock \emph{Mathematics of Computation}, 77\penalty0 (264):\penalty0
  1887--1916, 2008.

\bibitem[Cockburn et~al.(2009)Cockburn, Gopalakrishnan, and
  Lazarov]{cockburn2009}
B.~Cockburn, J.~Gopalakrishnan, and R.~Lazarov.
\newblock Unified hybridization of discontinuous {G}alerkin, mixed, and
  continuous {G}alerkin methods for second order elliptic problems.
\newblock \emph{SIAM Journal on Numerical Analysis}, 47\penalty0 (2):\penalty0
  1319--1365, 2009.

\bibitem[Collino et~al.(2020)Collino, Joly, and Lecouvez]{collino2020}
F.~Collino, P.~Joly, and M.~Lecouvez.
\newblock Exponentially convergent non overlapping domain decomposition methods
  for the {H}elmholtz equation.
\newblock \emph{ESAIM: Mathematical Modelling and Numerical Analysis},
  54\penalty0 (3):\penalty0 775--810, 2020.

\bibitem[Congreve et~al.(2019)Congreve, Gedicke, and Perugia]{congreve2019}
S.~Congreve, J.~Gedicke, and I.~Perugia.
\newblock Robust adaptive \textit{hp}-discontinuous {G}alerkin finite element
  methods for the {H}elmholtz equation.
\newblock \emph{SIAM Journal on Scientific Computing}, 41\penalty0
  (2):\penalty0 A1121--A1147, 2019.

\bibitem[Despr{\'e}s(1991)]{despres1991}
B.~Despr{\'e}s.
\newblock \emph{M{\'e}thodes de d{\'e}composition de domaine pour la
  propagation d'ondes en r{\'e}gime harmonique. Le th{\'e}or{\`e}me de Borg
  pour l'{\'e}quation de Hill vectorielle}.
\newblock PhD thesis, 1991.
\newblock Universit{\'e} de Paris-IX.

\bibitem[Despr{\'e}s et~al.(2021)Despr{\'e}s, Nicolopoulos, and
  Thierry]{despres2021}
B.~Despr{\'e}s, A.~Nicolopoulos, and B.~Thierry.
\newblock Corners and stable optimized domain decomposition methods for the
  {H}elmholtz problem.
\newblock \emph{Numerische Mathematik}, 149:\penalty0 779–818, 2021.

\bibitem[Engquist and Ying(2011)]{engquist2011}
B.~Engquist and L.~Ying.
\newblock Sweeping preconditioner for the {H}elmholtz equation: {M}oving
  {P}erfectly {M}atched {L}ayers.
\newblock \emph{Multiscale Modeling \& Simulation}, 9\penalty0 (2):\penalty0
  686–710, 2011.

\bibitem[Erlangga et~al.(2004)Erlangga, Vuik, and Oosterlee]{erlangga2004}
Y.~A. Erlangga, C.~Vuik, and C.~W. Oosterlee.
\newblock On a class of preconditioners for solving the {H}elmholtz equation.
\newblock \emph{Applied Numerical Mathematics}, 50\penalty0 (3):\penalty0
  409–425, 2004.

\bibitem[Ernst and Gander(2011)]{ernst2011difficult}
O.~G. Ernst and M.~J. Gander.
\newblock Why it is difficult to solve {H}elmholtz problems with classical
  iterative methods.
\newblock \emph{Numerical analysis of multiscale problems}, pages 325--363,
  2011.

\bibitem[Farhat et~al.(2009)Farhat, Tezaur, and Toivanen]{farhat2009}
C.~Farhat, R.~Tezaur, and J.~Toivanen.
\newblock A domain decomposition method for discontinuous {G}alerkin
  discretizations of {H}elmholtz problems with plane waves and {L}agrange
  multipliers.
\newblock \emph{International journal for numerical methods in engineering},
  78\penalty0 (13):\penalty0 1513--1531, 2009.

\bibitem[Gabard(2007)]{gabard2007}
G.~Gabard.
\newblock Discontinuous {G}alerkin methods with plane waves for time-harmonic
  problems.
\newblock \emph{Journal of Computational Physics}, 225\penalty0 (2):\penalty0
  1961--1984, 2007.

\bibitem[Gander and Zhang(2019)]{gander2019}
M.~J. Gander and H.~Zhang.
\newblock A class of iterative solvers for the {H}elmholtz equation:
  Factorizations, sweeping preconditioners, source transfer, single layer
  potentials, polarized traces, and optimized {S}chwarz methods.
\newblock \emph{SIAM Review}, 61\penalty0 (1):\penalty0 3--76, 2019.

\bibitem[Geuzaine and Remacle(2009)]{geuzaine2009}
C.~Geuzaine and J.-F. Remacle.
\newblock {Gmsh: A 3-D finite element mesh generator with built-in pre- and
  post-processing facilities}.
\newblock \emph{{International Journal for Numerical Methods in Engineering}},
  79\penalty0 (11):\penalty0 1309--1331, 2009.

\bibitem[Giorgiani et~al.(2013)Giorgiani, Fernández-Méndez, and
  Huerta]{giorgiani2013}
G.~Giorgiani, S.~Fernández-Méndez, and A.~Huerta.
\newblock Hybridizable discontinuous {G}alerkin \textit{p}-adaptivity for wave
  propagation problems.
\newblock \emph{International Journal for Numerical Methods in Fluids},
  72\penalty0 (12):\penalty0 1244--1262, 2013.

\bibitem[Griesmaier and Roland(2011)]{griesmaier2011}
M.~Griesmaier and P.~Roland.
\newblock Error analysis for a hybridizable discontinuous {G}alerkin method for
  the {H}elmholtz equation.
\newblock \emph{Journal of Scientific Computing}, 49\penalty0 (3):\penalty0
  291--310, 2011.

\bibitem[Hesthaven and Warburton(2007)]{hesthaven2007}
J.~S. Hesthaven and T.~Warburton.
\newblock \emph{Nodal discontinuous {G}alerkin methods: algorithms, analysis,
  and applications}.
\newblock Springer Science \& Business Media, 2007.

\bibitem[Huttunen et~al.(2002)Huttunen, Monk, and Kaipio]{huttunen2002}
T.~Huttunen, P.~Monk, and J.~P. Kaipio.
\newblock Computational aspects of the ultra-weak variational formulation.
\newblock \emph{Journal of Computational Physics}, 182\penalty0 (1):\penalty0
  27--46, 2002.

\bibitem[LeVeque(2002)]{leveque2002}
R.~J. LeVeque.
\newblock \emph{Finite Volume Methods for Hyperbolic Problems}.
\newblock Cambridge Texts in Applied Mathematics. Cambridge University Press,
  2002.

\bibitem[Li and Hesthaven(2014)]{li2014}
J.~Li and J.~S. Hesthaven.
\newblock Analysis and application of the nodal discontinuous {G}alerkin method
  for wave propagation in metamaterials.
\newblock \emph{Journal of Computational Physics}, 258:\penalty0 915--930,
  2014.

\bibitem[Lieu et~al.(2016)Lieu, Gabard, and B{\'e}riot]{lieu2016}
A.~Lieu, G.~Gabard, and H.~B{\'e}riot.
\newblock A comparison of high-order polynomial and wave-based methods for
  {H}elmholtz problems.
\newblock \emph{Journal of Computational Physics}, 321:\penalty0 105--125,
  2016.

\bibitem[Lu et~al.(2004)Lu, Zhang, and Cai]{lu2004}
T.~Lu, P.~Zhang, and W.~Cai.
\newblock Discontinuous {G}alerkin methods for dispersive and lossy {M}axwell's
  equations and pml boundary conditions.
\newblock \emph{Journal of Computational Physics}, 200\penalty0 (2):\penalty0
  549--580, 2004.

\bibitem[Modave and Chaumont-Frelet(2023)]{modave2023}
A.~Modave and T.~Chaumont-Frelet.
\newblock A hybridizable discontinuous {G}alerkin method with characteristic
  variables for {H}elmholtz problems.
\newblock \emph{Journal of Computational Physics}, 493:\penalty0 112459, 2023.

\bibitem[Modave et~al.(2020)Modave, Royer, Antoine, and Geuzaine]{modave2020}
A.~Modave, A.~Royer, X.~Antoine, and C.~Geuzaine.
\newblock A non-overlapping domain decomposition method with high-order
  transmission conditions and cross-point treatment for {H}elmholtz problems.
\newblock \emph{Computer Methods in Applied Mechanics and Engineering},
  368:\penalty0 113162, 2020.

\bibitem[Monk et~al.(2010)Monk, Sch{\"o}berl, and Sinwel]{monk2010hybridizing}
P.~Monk, J.~Sch{\"o}berl, and A.~Sinwel.
\newblock Hybridizing {R}aviart-{T}homas elements for the {H}elmholtz equation.
\newblock \emph{Electromagnetics}, 30\penalty0 (1-2):\penalty0 149--176, 2010.

\bibitem[Nguyen et~al.(2011)Nguyen, Peraire, and Cockburn]{nguyen2011}
N.~C. Nguyen, J.~Peraire, and B.~Cockburn.
\newblock High-order implicit hybridizable discontinuous {G}alerkin methods for
  acoustics and elastodynamics.
\newblock \emph{Journal of Computational Physics}, 230\penalty0 (10):\penalty0
  3695--3718, 2011.

\bibitem[Parolin et~al.(2023)Parolin, Huybrechs, and Moiola]{parolin2022}
E.~Parolin, D.~Huybrechs, and A.~Moiola.
\newblock Stable approximation of {H}elmholtz solutions in the disk by
  evanescent plane waves.
\newblock \emph{ESAIM: M2AN}, 57\penalty0 (6):\penalty0 3499--3536, 2023.

\bibitem[Pham et~al.(2024)Pham, Faucher, and Barucq]{pham2024numerical}
H.~Pham, F.~Faucher, and H.~Barucq.
\newblock Numerical investigation of stabilization in the {H}ybridizable
  {D}iscontinuous {G}alerkin method for linear anisotropic elastic equation.
\newblock \emph{Computer Methods in Applied Mechanics and Engineering},
  428:\penalty0 117080, 2024.

\bibitem[Quarteroni(2013)]{quarteroni2013}
A.~Quarteroni.
\newblock \emph{Numerical Models for Differential Problems}.
\newblock Springer Publishing Company, Incorporated, 2nd edition, 2013.

\bibitem[Rappaport et~al.(2025)Rappaport, Chaumont-Frelet, and
  Modave]{rappaport2025}
A.~E. Rappaport, T.~Chaumont-Frelet, and A.~Modave.
\newblock A hybridizable discontinuous {G}alerkin method with transmission
  variables for time-harmonic electromagnetic problems.
\newblock \emph{HAL preprint hal-05016261}, 2025.

\bibitem[Saad(2003)]{saad2003}
Y.~Saad.
\newblock \emph{Iterative methods for sparse linear systems.}
\newblock SIAM, 2003.

\bibitem[Shi et~al.(2018)Shi, Li, and Shu]{shi2018}
C.~Shi, J.~Li, and C.-W. Shu.
\newblock Discontinuous {G}alerkin methods for {M}axwell’s equations in
  {D}rude metamaterials on unstructured meshes.
\newblock \emph{Journal of Computational and Applied Mathematics},
  342:\penalty0 147--163, 2018.

\bibitem[Toro(2013)]{toro2013}
E.~F. Toro.
\newblock \emph{Riemann solvers and numerical methods for fluid dynamics: a
  practical introduction}.
\newblock Springer Science \& Business Media, 2013.

\bibitem[Vion and Geuzaine(2018)]{vion2018}
A.~Vion and C.~Geuzaine.
\newblock Improved sweeping preconditioners for domain decomposition algorithms
  applied to time-harmonic {H}elmholtz and {M}axwell problems.
\newblock \emph{ESAIM: Proceedings and Surveys}, 61:\penalty0 93--111, 2018.

\bibitem[Wilcox et~al.(2010)Wilcox, Stadler, Burstedde, and
  Ghattas]{wilcox2010}
L.~C. Wilcox, G.~Stadler, C.~Burstedde, and O.~Ghattas.
\newblock A high-order discontinuous {G}alerkin method for wave propagation
  through coupled elastic–acoustic media.
\newblock \emph{Journal of Computational Physics}, 229\penalty0 (24):\penalty0
  9373--9396, 2010.

\end{thebibliography}
\addcontentsline{toc}{section}{References}

\end{document}